\newtheorem{Theorem}{Theorem}[section]
\newtheorem{Lemma}[Theorem]{Lemma}
\newtheorem{Proposition}[Theorem]{Proposition}
\newtheorem{Corollary}[Theorem]{Corollary}
\theoremstyle{definition}
\newtheorem{Example}[Theorem]{Example}
\newtheorem{Problem}[Theorem]{Problem}
\theoremstyle{remark}
\newtheorem{Remark}[Theorem]{Remark}
\def\@thmcountersep{-}
\numberwithin{equation}{section}
\begin{document} 

\title[Generalizations of the Conway-Gordon theorems]{Generalizations of the Conway-Gordon theorems and intrinsic knotting on complete graphs}

\author{Hiroko Morishita}
\address{Division of Mathematics, Graduate School of Science, Tokyo Woman's Christian University, 2-6-1 Zempukuji, Suginami-ku, Tokyo 167-8585, Japan}
\email{d17m104@cis.twcu.ac.jp}

\author{Ryo Nikkuni}
\address{Department of Mathematics, School of Arts and Sciences, Tokyo Woman's Christian University, 2-6-1 Zempukuji, Suginami-ku, Tokyo 167-8585, Japan}
\email{nick@lab.twcu.ac.jp}
\thanks{The second author was supported by JSPS KAKENHI Grant Number JP15K04881.}

\subjclass{Primary 57M15; Secondary 57M25}

\date{}


\keywords{Spatial graphs, Conway-Gordon theorems}

\begin{abstract}
In 1983, Conway and Gordon proved that for every spatial complete graph on six vertices, the sum of the linking numbers over all of the constituent two-component links is odd, and that for every spatial complete graph on seven vertices, the sum of the Arf invariants over all of the Hamiltonian knots is odd. In 2009, the second author gave integral lifts of the Conway-Gordon theorems in terms of the square of the linking number and the second coefficient of the Conway polynomial. In this paper, we generalize the integral Conway-Gordon theorems to complete graphs with arbitrary number of vertices greater than or equal to six. As an application, we show that for every rectilinear spatial complete graph whose number of vertices is greater than or equal to six, the sum of the second coefficients of the Conway polynomials over all of the Hamiltonian knots is determined explicitly in terms of the number of triangle-triangle Hopf links. 

\end{abstract}

\maketitle

\section{Introduction} 

Throughout this paper we work in the piecewise linear category. Let $G$ be a finite simple graph. An embedding $f$ of $G$ into the $3$-dimensional Euclidean space ${\mathbb R}^{3}$ is called a {\it spatial embedding} of $G$, and the image $f(G)$ is called a {\it spatial graph} of $G$. Two spatial embeddings $f$ and $g$ of $G$ are said to be {\it equivalent} if there exists a self homeomorphism $\Phi$ on ${\mathbb R}^{3}$ such that $\Phi\left(f(G)\right) = g(G)$. We call a subgraph $\gamma$ of $G$ homeomorphic to the circle a {\it cycle} of $G$, and a cycle of $G$ containing exactly $k$ edges a {\it $k$-cycle} of $G$. In particular, a $k$-cycle is also called a {\it Hamiltonian cycle} if  $k$ equals the number of  vertices of $G$. We  denote the set of all $k$-cycles of $G$ by $\Gamma_{k}(G)$. Moreover, we denote the set of all pairs of two disjoint cycles of $G$ consisting of a $k$-cycle and an $l$-cycle by $\Gamma_{k,l}(G)$. For a cycle $\gamma$ (resp. a pair of disjoint cycles $\lambda$)  and a spatial embedding $f$ of $G$, $f(\gamma)$ (resp. $f(\lambda)$) is none other than a knot (resp. a $2$-component link) in $f(G)$. In particular for a Hamiltonian cycle $\gamma$ of $G$, we also call $f(\gamma)$ a {\it Hamiltonian knot} in $f(G)$. 

Let $K_{n}$ be the {\it complete graph} on $n$ vertices, that is the graph consisting of $n$ vertices such that each pair of its distinct vertices is connected by exactly one edge. Then the following fact is well-known as the {\it Conway-Gordon theorem}. 

\begin{Theorem}\label{CG1} {\rm (Conway-Gordon \cite{CG83})}
\begin{enumerate}
\item For any spatial embedding $f$ of $K_{6}$, we have 
\begin{eqnarray*}
\sum_{\lambda\in \Gamma_{3,3}\left(K_{6}\right)}{\rm lk}\left(f(\lambda)\right) \equiv 1 \pmod{2}, 
\end{eqnarray*}
where ${\rm lk}$ denotes the {\it linking number} in ${\mathbb R}^{3}$.
\item For any spatial embedding $f$ of $K_{7}$, we have 
\begin{eqnarray*}
\sum_{\gamma\in \Gamma_{7}\left(K_{7}\right)}a_{2}\left(f(\gamma)\right) \equiv 1 \pmod{2}, 
\end{eqnarray*}
where $a_{2}$ denotes the second coefficient of the {\it Conway polynomial}. 
\end{enumerate}
\end{Theorem}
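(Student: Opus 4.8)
The plan is to prove both parts by the classical two-step scheme: first show that the displayed $\mathbb{Z}/2$-valued sum takes the same value for every spatial embedding of $K_{6}$ (resp.\ $K_{7}$), and then compute it on one convenient embedding. For the first step I use the standard fact that any two spatial embeddings of a finite graph differ by an ambient isotopy together with finitely many \emph{crossing changes}, i.e.\ interchanges of over/under information at a transverse double point of a regular projection; such a double point lies either on two distinct edges $e,e'$ or in the interior of a single edge $e$, and one may arrange that it avoids all vertices. It therefore suffices to check that the sum is unchanged modulo $2$ under a single crossing change.

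For (1), put $\omega(f)=\sum_{\lambda\in\Gamma_{3,3}(K_{6})}\mathrm{lk}(f(\lambda))$, a sum of ten terms. A crossing change on edges $e,e'$ alters $\mathrm{lk}(f(\lambda))$, by $\pm 1$, precisely for those $\lambda=\{\gamma_{1},\gamma_{2}\}$ with $e\subset\gamma_{1}$ and $e'\subset\gamma_{2}$. If $e=e'$, or if $e$ and $e'$ share a vertex, there is no such $\lambda$, since $\gamma_{1}$ and $\gamma_{2}$ are disjoint. If $e=v_{1}v_{2}$ and $e'=v_{3}v_{4}$ are disjoint, then $\gamma_{1}$ must be the triangle $v_{1}v_{2}w$ and $\gamma_{2}=V(K_{6})\setminus\gamma_{1}$ must contain both $v_{3}$ and $v_{4}$, forcing $w\in\{v_{5},v_{6}\}$; so there are exactly two such $\lambda$. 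In every case $\omega(f)$ changes by an even integer, so $\omega(f)\bmod 2$ is a constant. It remains to exhibit a spatial embedding of $K_{6}$ in which one $\Gamma_{3,3}$-pair is a Hopf link and all other $\Gamma_{3,3}$-pairs are trivial links; for it $\omega\equiv 1\pmod 2$.

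For (2), put $\alpha(f)=\sum_{\gamma\in\Gamma_{7}(K_{7})}a_{2}(f(\gamma))$ and use the Conway skein relation $a_{2}(K_{+})-a_{2}(K_{-})=\mathrm{lk}(K_{0})$, where $K_{0}$ is the oriented smoothing at the changed crossing (since $a_{2}$ is orientation-independent and linking numbers modulo $2$ do not depend on orientations, the choices involved are harmless modulo $2$). A crossing change on $e,e'$ then changes $\alpha(f)$ by $\sum_{\gamma}\pm\,\mathrm{lk}\big((f\gamma)_{0}\big)$, the sum over Hamiltonian $7$-cycles $\gamma$ containing the edge(s) involved, and one must see that this is even. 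When $e\neq e'$ share a vertex, each smoothing $(f\gamma)_{0}$ splits off a trivial loop near that vertex, so every summand vanishes. When $e=e'$ is a single edge $uw$, the loop $\delta$ produced by the smoothing depends only on the chosen self-crossing, so each summand is $\mathrm{lk}\big(\delta,\, e''\cup P_{\gamma}\big)$ for a fixed $u$--$w$ arc $e''$ and $P_{\gamma}=\gamma\setminus uw$; computing $\mathrm{lk}(\delta,-)$ modulo $2$ as an intersection number against a Seifert surface of $\delta$ and summing over the $120$ cycles through $uw$ turns the total into $\sum_{ij}\eta_{ij}\cdot\#\{\text{Hamiltonian }u\text{--}w\text{ paths through }ij\}$ for suitable $\eta_{ij}\in\mathbb{Z}/2$, which is even because every edge of $K_{7}$ lies in an even number ($24$ or $48$) of such paths. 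The genuinely delicate case is a crossing change on two \emph{disjoint} edges $e=ab$, $e'=cd$: there are $2\cdot 4!=48$ Hamiltonian $7$-cycles through both, each giving a two-component link $(f\gamma)_{0}$ whose two components are the two arcs of $\gamma\setminus(e\cup e')$ closed up near the former crossing, and one must show that these $48$ linking numbers, which genuinely vary with $\gamma$, add up to an even integer. I would attack this by expressing each $\mathrm{lk}\big((f\gamma)_{0}\big)$ modulo $2$ as an intersection number against a Seifert surface and reorganising the resulting double sum over cycles and edges into counts of Hamiltonian cycles through small prescribed edge-sets (equivalently, by comparing $(f\gamma)_{0}$ with a genuine $\Gamma_{3,4}$-sublink of $f(K_{7})$ and controlling the discrepancy). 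This parity computation is the main obstacle: unlike in the $K_{6}$ case a bare count of the cycles through $e$ and $e'$ does not suffice, and one needs cancellation among the configurations.

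Granting the invariance, one finishes (2) by evaluating $\alpha$ on a standard embedding of $K_{7}$ in which a single Hamiltonian cycle is a trefoil and all other cycles are unknots; then $\alpha\equiv a_{2}(\text{trefoil})=1\pmod 2$.
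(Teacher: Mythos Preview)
First, note that the paper does not prove Theorem~\ref{CG1} itself: it is stated as the classical Conway--Gordon result, and the paper only remarks that part~(1) is the mod~$2$ reduction of the cited integral formula (Theorem~\ref{main1}) and that part~(2) drops out of Corollary~\ref{maincor0} at $n=7$ (Remark~\ref{mainrem1}), both of which rest on results imported from \cite{Nikkuni09} and \cite{Daniella15}. So your crossing--change argument is not competing against a proof in this paper but is a reconstruction of the original 1983 argument; for part~(1) it is correct and complete.

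For part~(2) there is a genuine gap. You explicitly write ``Granting the invariance'' after only \emph{sketching} the disjoint--edge case, and that case is exactly where the content lies. When $e=ab$ and $e'=cd$ are disjoint, \emph{both} components of the smoothed link $(f\gamma)_{0}$ vary with $\gamma$, so the Seifert--surface/edge--count reduction that handled your self--crossing case (a single fixed loop $\delta$ against a varying path) is not directly available. The idea you allude to---replacing the smoothing arcs by the honest edges $ac,bd$ (or $ad,bc$) of $K_{7}$ to obtain a genuine $\Gamma_{3,4}$ sublink, controlling the discrepancy, and then handling the resulting sum over $\Gamma_{3,4}$---is the right one, but it must actually be carried out; as written, part~(2) is unproven. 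A smaller issue: in the adjacent--edge case your assertion that ``every summand vanishes'' because the split--off loop is ``near that vertex'' is not justified for an arbitrary diagram (the crossing between $e=uv$ and $e'=vw$ need not be near $v$, and the small loop can link the rest). This can be repaired either by first isotoping the crossing close to $v$, or---more in keeping with your own method---by noting that here too one component of the smoothing is a fixed loop $\delta$ while the other is a fixed arc together with a Hamiltonian $u$--$w$ path in $K_{7}\setminus\{v\}\cong K_{6}$, and the relevant edge--through--path counts ($6$ or $12$) are again even.
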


The second coefficient of the Conway polynomial of a knot is also congruent with the {\it Arf invariant} of the knot modulo two \cite[Corollary 10.8]{Kauffman83}. Theorem \ref{CG1} implies that $K_{6}$ is {\it intrinsically linked}, that is, every spatial graph of $K_{6}$ contains a nonsplittable $2$-component link, and $K_{7}$ is {\it intrinsically knotted}, that is, every spatial graph of $K_{7}$ contains a nontrivial knot. The Conway-Gordon theorem made a beginning of the study of intrinsic linking and knotting of graphs and has motivated a lot of studies of intrinsic properties of graphs (see for example \cite[\S\S 2-6]{FMMRN17}). On the other hand, as far as the authors know, there have been little results about a generalization of the Conway-Gordon type congruences for complete graphs on eight or more vertices. Our purposes in this paper are to generalize the Conway-Gordon theorems for complete graphs with arbitrary number of vertices greater than or equal to six and to investigate the behavior of the nontrivial Hamiltonian knots in a spatial complete graph. First of all, we recall an integral Conway-Gordon theorem for $K_{6}$ which was proven by the second author as follows.

\begin{Theorem}\label{main1} {\rm (Nikkuni \cite{Nikkuni09})} 
For any spatial embedding $f$ of $K_{6}$, we have 
\begin{eqnarray}\label{k6ref}
\ \ \ \ \ \ 2\sum_{\gamma\in \Gamma_{6}\left(K_{6}\right)}a_{2}\left(f(\gamma)\right)
- 2\sum_{\gamma\in \Gamma_{5}\left(K_{6}\right)}a_{2}\left(f(\gamma)\right)
= 
\sum_{\lambda\in \Gamma_{3,3}\left(K_{6}\right)}{{\rm lk}\left(f(\lambda)\right)}^{2}-1. 
\end{eqnarray}
\end{Theorem}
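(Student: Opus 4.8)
The plan is to show that the integer
\[
\Phi(f)\;:=\;2\sum_{\gamma\in \Gamma_{6}(K_{6})}a_{2}(f(\gamma))-2\sum_{\gamma\in \Gamma_{5}(K_{6})}a_{2}(f(\gamma))-\sum_{\lambda\in \Gamma_{3,3}(K_{6})}{\rm lk}(f(\lambda))^{2}
\]
does not depend on the spatial embedding $f$ of $K_{6}$, and then to evaluate it on one convenient embedding, obtaining $-1$; this is exactly \eqref{k6ref}. Since $\Phi$ is manifestly invariant under ambient isotopy, and since any two spatial embeddings of a graph are related by a finite sequence of ambient isotopies and crossing changes, it suffices to prove $\Phi(f)=\Phi(g)$ whenever $f$ and $g$ differ by a single crossing change.

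So fix such a crossing change, at a crossing $c$ between strands of two edges $e,e'$ of $K_{6}$. If $e=e'$, the oriented smoothing at $c$ splits off a loop bounding a disk, so no $a_{2}$ of a constituent knot and no linking number of a constituent link is altered and $\Phi$ is unchanged; in particular every edge may be assumed embedded as a trivial arc, and only the case $e\neq e'$ remains. For $e\neq e'$ I would treat the three sums separately. A cycle $\gamma$ affects the $a_{2}$-sums only if $e,e'\subset\gamma$, and then the Conway skein relation, in the form $a_{2}(J_{+})-a_{2}(J_{-})={\rm lk}(J_{0})$, gives $a_{2}(f(\gamma))-a_{2}(g(\gamma))=\pm\,{\rm lk}\bigl((f(\gamma))_{0}\bigr)$, where $(f(\gamma))_{0}$ is the two-component link obtained from $f(\gamma)$ by the oriented smoothing at $c$. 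A pair $\lambda=\{\gamma_{1},\gamma_{2}\}\in\Gamma_{3,3}(K_{6})$ affects the third sum only if $e$ and $e'$ lie in different components of $\lambda$, and then ${\rm lk}(f(\lambda))^{2}-{\rm lk}(g(\lambda))^{2}=\pm\bigl({\rm lk}(f(\lambda))+{\rm lk}(g(\lambda))\bigr)$. The task is to match the total $a_{2}$-change against the total ${\rm lk}^{2}$-change, and two ingredients drive this. Combinatorially, one counts how many cycles of each length pass through a fixed pair of edges of $K_{6}$: a pair of disjoint edges lies in $12$ Hamiltonian and $8$ five-cycles, a pair of adjacent edges in $6$ of each, and a pair of disjoint triangles separating $e$ from $e'$ exists only when $e$ and $e'$ are disjoint (with exactly two such pairs). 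Geometrically, one rewrites each ${\rm lk}\bigl((f(\gamma))_{0}\bigr)$: the oriented smoothing at $c$ recombines $e$ and $e'$ into two arcs, so each component of $(f(\gamma))_{0}$ consists of a sub-arc of $e$, a sub-arc of $e'$, and a (possibly trivial) sub-path of $\gamma$; computing the linking number of the two components by intersection with a Seifert surface makes it additive over the edges involved, and forming the appropriate signed differences over the various cycles through $e$ and $e'$ reduces the entire matching to a homological identity among cycles inside the six induced subgraphs $K_{6}\setminus v\cong K_{5}$ (recall that $\Gamma_{5}(K_{6})$ is the disjoint union of the Hamiltonian cycles of these $K_{5}$'s), together with the counts above.

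I expect this geometric step, with its sign bookkeeping, to be the main obstacle. The subtlety is that $a_{2}$ of a knot is orientation-independent while the oriented smoothing at $c$ is not: one must check that reversing the orientation of $\gamma$ changes neither the sign of $c$ in the diagram of $\gamma$ nor the resulting unoriented two-component link, and one must keep track of the fact that two distinct cycles through $e$ and $e'$ may present the two different smoothings of $c$. Once $\Phi$ is known to be constant, it remains to evaluate it once. I would take an embedding $f_{0}$ of $K_{6}$ in which every Hamiltonian knot and every $5$-cycle knot is trivial and exactly one of the ten pairs of disjoint triangles is a Hopf link while the other nine are split---such an $f_{0}$ is furnished by the nearly planar construction underlying Theorem \ref{CG1}(1), and checking its knot and link types is elementary. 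For this $f_{0}$ one has $\sum_{\Gamma_{6}}a_{2}=\sum_{\Gamma_{5}}a_{2}=0$ and $\sum_{\Gamma_{3,3}}{\rm lk}^{2}=1$, so $\Phi(f_{0})=-1$; hence $\Phi\equiv-1$, which is \eqref{k6ref}. (As a check, reducing \eqref{k6ref} modulo $2$ and using $x^{2}\equiv x\pmod{2}$ recovers Theorem \ref{CG1}(1).)
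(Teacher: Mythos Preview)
This theorem is not proved in the present paper: it is quoted from \cite{Nikkuni09} and used as the base case of the induction for Theorem~\ref{mainthm}. So there is no proof here to compare against; what you have written is a sketch of the argument in the cited source.

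Your overall strategy is the right one and is essentially that of \cite{Nikkuni09}: show that $\Phi$ is invariant under a single crossing change and then evaluate it on the standard embedding. Your combinatorial counts (how many $5$- and $6$-cycles and $(3,3)$-pairs contain a given pair of edges) are correct, and the final evaluation on the ``one Hopf link'' embedding is correct. But the heart of the proof --- the verification, for $e\neq e'$, that the weighted sum of the linking numbers of the smoothed links over $\Gamma_{6}$ and $\Gamma_{5}$ exactly matches the change in $\sum{\rm lk}^{2}$ over $\Gamma_{3,3}$ --- is only described in outline (``reduces the entire matching to a homological identity'') and not carried out. That cancellation is precisely where the specific coefficients $2,-2,-1$ enter, and until it is written down explicitly you do not have a proof.

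There is also a slip in your handling of the case $e=e'$. The loop split off by smoothing a self-crossing of an edge is unknotted, but ``bounds a disk'' does not imply ``has zero linking number with the other component'': that disk may meet the rest of the embedded graph, so $a_{2}$ of a cycle through $e$ can genuinely change. What is true is that the ${\rm lk}^{2}$-term over $\Gamma_{3,3}$ is unchanged (both strands lie in the same component of any such link); one must then still check that the $a_{2}$-changes over $\Gamma_{6}$ and $\Gamma_{5}$ cancel with the weights $2,-2$, or else argue separately that the sequence of crossing changes connecting two embeddings can be chosen to involve only distinct edges. Either route needs to be made explicit.
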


Note that Theorem \ref{CG1} (1) can be recovered by taking the modulo two reduction of (\ref{k6ref}), namely Theorem \ref{main1} is an integral lift of Theorem \ref{CG1} (1). In \cite{Nikkuni09}, an integral lift of Theorem \ref{CG1} (2) was also given (see Theorem \ref{K331} (1) of the present paper). In this paper, we generalize Theorem \ref{main1} for complete graphs with arbitrary number of vertices greater than or equal to six as follows. 

\begin{Theorem}\label{mainthm} 
Let $n\ge 6$ be an integer. For any spatial embedding $f$ of $K_{n}$, we have 
\begin{eqnarray}\label{maintheorem}
&&\sum_{\gamma\in \Gamma_{n}\left(K_{n}\right)}a_{2}\left(f(\gamma)\right)
- (n-5)!\sum_{\gamma\in \Gamma_{5}\left(K_{n}\right)}a_{2}\left(f(\gamma)\right)\\
&=& 
\frac{(n-5)!}{2} 
\bigg(
\sum_{\lambda\in \Gamma_{3,3}\left(K_{n}\right)}{{\rm lk}\left(f(\lambda)\right)}^{2}
- \binom{n-1}{5}
\bigg).\nonumber 
\end{eqnarray}
\end{Theorem}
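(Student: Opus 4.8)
The plan is to argue by induction on $n$. The base case $n=6$ is just Theorem~\ref{main1}: dividing both sides of (\ref{k6ref}) by $2$ gives (\ref{maintheorem}) for $n=6$, since $(6-5)!=1$ and $\binom{5}{5}=1$. So fix $n\ge 7$, assume (\ref{maintheorem}) for $K_{n-1}$, and let $f$ be a spatial embedding of $K_{n}$.

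The first ingredient is to \emph{average the induction hypothesis over the vertex-deleted subgraphs}. The graph $K_{n}$ has exactly $n$ subgraphs isomorphic to $K_{n-1}$, namely $K_{n}-v$ for each vertex $v$; a fixed $k$-cycle of $K_{n}$ lies in exactly $n-k$ of them, and a fixed pair of disjoint triangles (using six vertices) lies in exactly $n-6$ of them. Applying (\ref{maintheorem}) for $K_{n-1}$ to each restriction $f|_{K_{n}-v}$ and summing over $v$, these multiplicities ($1$ for $(n-1)$-cycles, $n-5$ for $5$-cycles, $n-6$ for pairs of disjoint triangles) collapse the subgraph sums into sums over $K_{n}$, and one obtains
\[
\sum_{\gamma\in\Gamma_{n-1}(K_{n})}a_{2}(f(\gamma)) - (n-5)!\sum_{\gamma\in\Gamma_{5}(K_{n})}a_{2}(f(\gamma))
= \frac{(n-6)(n-6)!}{2}\sum_{\lambda\in\Gamma_{3,3}(K_{n})}{\rm lk}(f(\lambda))^{2} - \frac{n(n-6)!}{2}\binom{n-2}{5}.
\]

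The second, and decisive, ingredient is a \emph{reduction lemma} which trades the Hamiltonian sum for the length-$(n-1)$ sum: for every spatial embedding $f$ of $K_{n}$,
\[
\sum_{\gamma\in\Gamma_{n}(K_{n})}a_{2}(f(\gamma)) - \sum_{\gamma\in\Gamma_{n-1}(K_{n})}a_{2}(f(\gamma))
= \frac{(n-6)!}{2}\sum_{\lambda\in\Gamma_{3,3}(K_{n})}{\rm lk}(f(\lambda))^{2} - \frac{(n-2)!}{48}.
\]
Adding this to the previous identity cancels the $\Gamma_{n-1}$-sum, and a routine manipulation of factorials and binomial coefficients (using $(n-6)(n-6)!+(n-6)!=(n-5)!$ and $\tfrac{n(n-6)!}{2}\binom{n-2}{5}+\tfrac{(n-2)!}{48}=\tfrac{(n-5)!}{2}\binom{n-1}{5}$, the latter since $(n-1)(n-2)!=(n-1)!$) yields exactly (\ref{maintheorem}) for $K_{n}$.

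It remains to establish the reduction lemma, which is where the real work lies. Since both sides are invariants of the equivalence class of $f$, it suffices to show they change by the same amount under a single crossing change and then to verify the identity on one explicit embedding (for instance a standard rectilinear one); the case $n=7$ of this base verification can also be extracted from the integral Conway--Gordon theorem for $K_{7}$ (Theorem~\ref{K331}(1)) together with Theorem~\ref{main1}. Under a crossing change at a crossing between two edges $e$ and $e'$, the skein relation for the Conway polynomial gives $a_{2}(J_{+})-a_{2}(J_{-})={\rm lk}(J_{0})$, so the left side changes by a signed sum, taken over those Hamiltonian and $(n-1)$-cycles containing both $e$ and $e'$, of the linking numbers of the corresponding oriented smoothings; the right side changes by $\tfrac{(n-6)!}{2}$ times the total change of $\sum_{\Gamma_{3,3}}{\rm lk}^{2}$, each affected pair $\lambda$ contributing $({\rm lk}(f(\lambda))\pm1)^{2}-{\rm lk}(f(\lambda))^{2}$. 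The underlying geometry is the following local picture: for a Hamiltonian cycle $C$ and a vertex $v$ on it with $C$-neighbours $u,w$, the $(n-1)$-cycle $C_{v}$ obtained by replacing the path $uvw$ with the chord $uw$, together with $C$ and the triangle $uvw$, are the three constituent cycles of a spatial $\theta$-subgraph of $f(K_{n})$, and each $(n-1)$-cycle arises from exactly $n-1$ Hamiltonian cycles in this way. The main obstacle is precisely to carry this out: to match, edge-pair by edge-pair, the crossing-change variation of $\sum_{\Gamma_{n}}a_{2}-\sum_{\Gamma_{n-1}}a_{2}$ against that of $\tfrac{(n-6)!}{2}\sum_{\Gamma_{3,3}}{\rm lk}^{2}$ --- a delicate count of which cycles pass through a given pair of edges, with which smoothing, and how their smoothed linking numbers aggregate into linking numbers of disjoint pairs of triangles --- and then to perform the single explicit base computation producing the constant $\tfrac{(n-2)!}{48}$. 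By comparison, the vertex-deletion averaging and the closing factorial arithmetic are routine.
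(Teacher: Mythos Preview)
Your vertex-deletion averaging is correct, and the factorial arithmetic that closes the induction, \emph{given} your reduction lemma, checks out. The gap is that the reduction lemma
\[
\sum_{\gamma\in\Gamma_{n}(K_n)}a_{2}(f(\gamma)) - \sum_{\gamma\in\Gamma_{n-1}(K_n)}a_{2}(f(\gamma)) = \tfrac{(n-6)!}{2}\sum_{\lambda\in\Gamma_{3,3}(K_n)}\mathrm{lk}(f(\lambda))^{2} - \tfrac{(n-2)!}{48}
\]
is never actually proven: you outline a crossing-change strategy and explicitly label it ``the main obstacle''. That strategy is more problematic than you suggest. Under a crossing change at disjoint edges $e,e'$, the variation of $\sum_{\Gamma_{n}}a_{2}$ is a signed sum of linking numbers of $(p,q)$-links with $p+q=n$ (the smoothings of Hamiltonian cycles through $e$ and $e'$), while the variation of $\sum_{\Gamma_{3,3}}\mathrm{lk}^{2}$ is a sum of terms $\pm 2\,\mathrm{lk}(f(\lambda))+1$ over pairs of \emph{triangles} separating $e$ from $e'$. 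Matching these already requires nontrivial identities relating linking numbers of cycles of different lengths --- precisely the kind of machinery the paper builds separately. And the ``single explicit base computation'' must be carried out for every $n$; for general $n$ this means knowing $\sum_{\Gamma_n}a_2$ on some concrete embedding, which is effectively the content of the theorem (cf.\ Remark~\ref{mainrem0}).

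The paper sidesteps all of this with a different inductive mechanism. Its Lemma~\ref{ind} does not delete a vertex but \emph{subdivides an edge}: for each vertex $m$ and each edge $\overline{ij}$ of $K_n\setminus m$, the subgraph $F_{ij}^{(m)}$ obtained by replacing $\overline{ij}$ with the path $\overline{imj}$ (and discarding the other edges at $m$) is homeomorphic to $K_{n-1}$, and its Hamiltonian cycles are exactly the $n$-cycles of $K_n$ passing through $\overline{imj}$. Summing the inductive hypothesis over all such $F_{ij}^{(m)}$ produces the $\Gamma_n(K_n)$-sum directly, at the price of introducing $\Gamma_6$- and $\Gamma_{3,4}$-terms. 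These are eliminated using two auxiliary identities proven beforehand: Lemma~\ref{nlem}(1) (the $K_6$-averaged form of Theorem~\ref{main1}) and Theorem~\ref{lk34}, the relation $\sum_{\Gamma_{3,4}(K_7)}\mathrm{lk}^2 = 2\sum_{\Gamma_{3,3}(K_7)}\mathrm{lk}^2$, which the paper extracts from O'Donnol's $K_{3,3,1}$ formula. The induction then closes by pure algebra, with no crossing-change analysis and no evaluation on a specific embedding. If you want to rescue your route, the cleanest fix is to replace the unproven reduction lemma by this subdivision argument.
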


By Theorem \ref{mainthm}, we also obtain formulae of two types. First we have the following inequality, where 
the case of $n=7$ has already been observed in \cite[Lemma 4.2]{Nikkuni09}.

\begin{Corollary}\label{maincor} 
Let $n\ge 6$ be an integer. For any spatial embedding $f$ of $K_{n}$, we have 
\begin{eqnarray*}
\sum_{\gamma\in \Gamma_{n}\left(K_{n}\right)}a_{2}\left(f(\gamma)\right)
- (n-5)!\sum_{\gamma\in \Gamma_{5}\left(K_{n}\right)}a_{2}\left(f(\gamma)\right)
\ge   
\frac{(n-5)(n-6)(n-1)!}{2\cdot 6!}.
\end{eqnarray*}
\end{Corollary}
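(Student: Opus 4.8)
The plan is to start from the exact identity in Theorem \ref{mainthm} and to bound the right-hand side from below by a purely combinatorial quantity. Rearranging \eqref{maintheorem}, the left-hand side of the desired inequality equals
\[
\frac{(n-5)!}{2}\left(\sum_{\lambda\in\Gamma_{3,3}(K_n)}{\rm lk}(f(\lambda))^2-\binom{n-1}{5}\right),
\]
so it suffices to show
\[
\sum_{\lambda\in\Gamma_{3,3}(K_n)}{\rm lk}(f(\lambda))^2\ \ge\ \binom{n-1}{5}+\frac{(n-5)(n-6)}{6!}\cdot\frac{(n-1)!}{(n-5)!},
\]
and the last summand simplifies to $\binom{n-1}{6}$ after one checks that $\frac{(n-5)(n-6)}{6!}\cdot\frac{(n-1)!}{(n-5)!}=\frac{(n-1)!}{6!\,(n-7)!}=\binom{n-1}{6}$. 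So the whole corollary reduces to the combinatorial-topological estimate $\sum_{\lambda\in\Gamma_{3,3}(K_n)}{\rm lk}(f(\lambda))^2\ge\binom{n-1}{5}+\binom{n-1}{6}$.

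To prove that estimate I would exploit the Conway--Gordon congruence for $K_6$ (Theorem \ref{CG1}(1)) applied to all $\binom{n}{6}$ induced subgraphs on six vertices. Since squares of integers are nonnegative and a square is at least $1$ exactly when the integer is odd, each six-vertex subgraph $H\cong K_6$ contributes $\sum_{\lambda\in\Gamma_{3,3}(H)}{\rm lk}(f(\lambda))^2\ge(\text{number of odd terms})\ge 1$, because by Theorem \ref{CG1}(1) the number of $\lambda\in\Gamma_{3,3}(H)$ with odd ${\rm lk}(f(\lambda))$ is odd, hence at least one. Now I count, over all choices of such $H$, how many times a fixed pair $\lambda\in\Gamma_{3,3}(K_n)$ of two disjoint triangles is counted: $\lambda$ uses $6$ vertices, so it appears in exactly one $H$; thus $\sum_H\sum_{\lambda\in\Gamma_{3,3}(H)}=\sum_{\lambda\in\Gamma_{3,3}(K_n)}$ and we only get $\sum_\lambda{\rm lk}(f(\lambda))^2\ge\binom{n}{6}$, which is not quite the bound we need. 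The fix is to run the same averaging but separately over the sub-families indexed by which vertex (or vertices) are omitted, or — more cleanly — to use the sharper consequence of Theorem \ref{main1}: for $n=6$ one has the exact value $\sum_{\lambda}{\rm lk}^2=1+2(\sum_{\Gamma_6}a_2-\sum_{\Gamma_5}a_2)$, and the right-hand side is $\ge 1$ with equality only for specific embeddings. I expect the correct route is to apply Theorem \ref{main1} to all $\binom{n}{6}$ copies of $K_6$ and sum, using that $a_2$ of any knot is an integer and that the $\Gamma_5$ and $\Gamma_6$ contributions can be re-indexed by counting multiplicities: a fixed $5$-cycle lies in $n-5$ of the $K_6$'s while a fixed $6$-cycle lies in exactly one. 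Carrying out that bookkeeping turns $\sum_{H\cong K_6}\big(2\sum_{\Gamma_6(H)}a_2-2\sum_{\Gamma_5(H)}a_2\big)$ into a combination of the global sums appearing in Theorem \ref{mainthm}, and comparing with \eqref{maintheorem} pins down exactly the extra $\binom{n-1}{6}$ term.

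The main obstacle is getting the combinatorial constants exactly right: the naive double-counting over six-vertex subsets only yields $\binom{n}{6}$, whereas the target $\binom{n-1}{5}+\binom{n-1}{6}=\binom{n}{6}+\binom{n-1}{5}$ is strictly larger, so a bare "each $K_6$ contributes at least $1$" argument is insufficient and one must extract the additional $\binom{n-1}{5}$ from the finer structure — presumably from the fact (already used to prove Theorem \ref{mainthm}) that the $\Gamma_5$-sum is itself nonnegative after suitable normalization, or from a second application of Conway--Gordon-type positivity at the level of seven-vertex subgraphs. I would therefore first settle the exact algebraic identity relating $\sum_{H\cong K_6}(\cdots)$ to the quantities in \eqref{maintheorem}, then feed in the inequalities $a_2(\text{knot})\in\mathbb Z$ and ${\rm lk}^2\ge 0$, and finally verify the constant $\frac{(n-5)(n-6)(n-1)!}{2\cdot 6!}$ matches $\frac{(n-5)!}{2}\binom{n-1}{6}$ by a direct factorial computation. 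Everything after the identity is routine; the identity and its constant are where the care is needed.
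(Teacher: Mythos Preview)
Your reduction via Theorem \ref{mainthm} is correct, and so is your ``naive'' argument: applying Theorem \ref{CG1}(1) to each of the $\binom{n}{6}$ induced $K_6$'s and noting that each pair $\lambda\in\Gamma_{3,3}(K_n)$ lies in exactly one such $K_6$ gives $\sum_{\lambda}{\rm lk}(f(\lambda))^2\ge\binom{n}{6}$. This is precisely the paper's proof.

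The problem is an arithmetic slip in the paragraph beginning ``The main obstacle''. You write $\binom{n-1}{5}+\binom{n-1}{6}=\binom{n}{6}+\binom{n-1}{5}$, but that is false; Pascal's rule gives $\binom{n-1}{5}+\binom{n-1}{6}=\binom{n}{6}$. So your target bound $\binom{n-1}{5}+\binom{n-1}{6}$ is \emph{equal} to $\binom{n}{6}$, not strictly larger, and the ``naive'' double-counting is already exactly sufficient. Everything after that point --- the attempt to squeeze out an additional $\binom{n-1}{5}$, the talk of summing Theorem \ref{main1} over all $K_6$'s and re-indexing, the appeal to seven-vertex subgraphs --- is an unnecessary detour caused by this mistake. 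Once you correct the Pascal identity, your argument collapses to the paper's two-line proof: $\sum_\lambda{\rm lk}^2\ge\binom{n}{6}$ by Conway--Gordon on each $K_6$, and $\frac{(n-5)!}{2}\bigl(\binom{n}{6}-\binom{n-1}{5}\bigr)=\frac{(n-5)(n-6)(n-1)!}{2\cdot 6!}$ by direct computation.
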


The lower bound of Corollary \ref{maincor} is sharp, see Remark \ref{mainrem0}. Next we also have the following congruence, that is a generalization of Theorem \ref{CG1} (2). 

\begin{Corollary}\label{maincor0} 
Let $n\ge 7$ be an integer. For any spatial embedding $f$ of $K_{n}$, we have the following congruence modulo $(n-5)!$: 
\begin{eqnarray*}
\sum_{\gamma\in \Gamma_{n}\left(K_{n}\right)}a_{2}\left(f(\gamma)\right) 
\equiv 
\left\{
   \begin{array}{@{\,}lll}
   {\displaystyle - \frac{(n-5)!}{2} \binom{n-1}{5}} & (n\equiv 0\pmod{8}) \\
   0 & (n\not\equiv 0,7\pmod{8}) \\
   {\displaystyle \frac{(n-5)!}{2}\binom{n}{6}} & (n\equiv 7\pmod{8}). 
   \end{array}
\right.
\end{eqnarray*}
\end{Corollary}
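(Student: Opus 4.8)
The plan is to reduce the identity of Theorem~\ref{mainthm} modulo $(n-5)!$ and then to determine the residue of the resulting right-hand side by a short parity computation. Since $n\ge 7$, the factorial $(n-5)!$ is even, so $(n-5)!/2$ is a positive integer and the summand $(n-5)!\sum_{\gamma\in\Gamma_{5}(K_{n})}a_{2}(f(\gamma))$ is divisible by $(n-5)!$. Hence Theorem~\ref{mainthm} yields
\begin{eqnarray*}
\sum_{\gamma\in \Gamma_{n}(K_{n})}a_{2}(f(\gamma))
\equiv
\frac{(n-5)!}{2}\left(\sum_{\lambda\in \Gamma_{3,3}(K_{n})}{\rm lk}(f(\lambda))^{2}-\binom{n-1}{5}\right)
\pmod{(n-5)!}.
\end{eqnarray*}
For an integer $k$ one has $\tfrac{(n-5)!}{2}k\equiv 0\pmod{(n-5)!}$ when $k$ is even and $\tfrac{(n-5)!}{2}k\equiv \tfrac{(n-5)!}{2}\pmod{(n-5)!}$ when $k$ is odd, so everything comes down to the parity of $\sum_{\lambda}{\rm lk}(f(\lambda))^{2}-\binom{n-1}{5}$.

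To compute that parity I would first use ${\rm lk}(f(\lambda))^{2}\equiv {\rm lk}(f(\lambda))\pmod 2$. Each $\lambda\in\Gamma_{3,3}(K_{n})$ spans exactly six vertices, and grouping the elements of $\Gamma_{3,3}(K_{n})$ according to the $6$-element vertex subset they span identifies each group with $\Gamma_{3,3}$ of a sub-$K_{6}$; applying Theorem~\ref{CG1}(1) to each of the $\binom{n}{6}$ such copies of $K_{6}$ gives
\begin{eqnarray*}
\sum_{\lambda\in\Gamma_{3,3}(K_{n})}{\rm lk}(f(\lambda))^{2}\equiv \sum_{\lambda\in\Gamma_{3,3}(K_{n})}{\rm lk}(f(\lambda))\equiv\binom{n}{6}\pmod 2.
\end{eqnarray*}
By Pascal's rule $\binom{n}{6}-\binom{n-1}{5}=\binom{n-1}{6}$, so $\sum_{\lambda}{\rm lk}(f(\lambda))^{2}-\binom{n-1}{5}\equiv\binom{n-1}{6}\pmod 2$.

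Finally I would read off $\binom{n-1}{6}\bmod 2$ from Lucas' theorem (equivalently Kummer's theorem): since $6=110_{2}$, this coefficient is odd precisely when $n-1\equiv 6$ or $7\pmod 8$, i.e. when $n\equiv 7$ or $0\pmod 8$. Thus the right-hand side of the displayed congruence is $\equiv 0\pmod{(n-5)!}$ for $n\not\equiv 0,7\pmod 8$, and $\equiv \tfrac{(n-5)!}{2}\pmod{(n-5)!}$ for $n\equiv 0,7\pmod 8$. To put this in the stated form, when $n\equiv 0\pmod 8$ one has $n-1\equiv 7\pmod 8$, whence $\binom{n-1}{5}$ is odd and $\tfrac{(n-5)!}{2}\equiv -\tfrac{(n-5)!}{2}\binom{n-1}{5}\pmod{(n-5)!}$; and when $n\equiv 7\pmod 8$ one has $\binom{n}{6}$ odd and $\tfrac{(n-5)!}{2}\equiv \tfrac{(n-5)!}{2}\binom{n}{6}\pmod{(n-5)!}$. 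This gives the three cases, and for $n=7$ one recovers Theorem~\ref{CG1}(2).

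The whole argument is fairly mechanical once Theorem~\ref{mainthm} is available; I expect the only points requiring attention to be the mod-$2$ evaluation $\sum_{\lambda}{\rm lk}(f(\lambda))^{2}\equiv\binom{n}{6}$ via the Conway--Gordon congruence for the sub-$K_{6}$'s, and keeping the parities of the various binomial coefficients straight. One could alternatively carry out the second step through a known generalization of Theorem~\ref{CG1}(1) to $K_{n}$, but the direct subgraph-counting argument seems cleanest.
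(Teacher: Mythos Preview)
Your proof is correct and follows essentially the same route as the paper: reduce Theorem~\ref{mainthm} modulo $(n-5)!$, use the Conway--Gordon parity over the sub-$K_{6}$'s to get $\sum_{\lambda}{\rm lk}(f(\lambda))^{2}\equiv\binom{n}{6}\pmod 2$, and finish with Lucas's theorem. The only cosmetic differences are that the paper routes the parity computation through a special embedding realizing $\sum_{\lambda}{\rm lk}^{2}=\binom{n}{6}$ exactly (Remark~\ref{mainrem0}) and treats the parities of $\binom{n}{6}$ and $\binom{n-1}{5}$ separately, whereas you compute the parity directly and collapse the two binomials via Pascal's rule into $\binom{n-1}{6}$; your version is slightly more self-contained.
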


Corollary \ref{maincor0} contains the preceding results concerning Conway-Gordon type congruences on the sum of $a_{2}$, see Remark \ref{mainrem1}.

Theorem \ref{mainthm} (and Corollary \ref{maincor}) is also useful for investigating the behavior of the nontrivial Hamiltonian knots in rectilinear spatial complete graphs. Here, a spatial embedding $f_{\rm r}$ of a graph $G$ is said to be {\it rectilinear} if for any edge $e$ of $G$, $f_{\rm r}(e)$ is a straight line segment in ${\mathbb R}^{3}$. A rectilinear spatial graph appears in polymer chemistry as a mathematical model for chemical compounds (see \cite[\S 7]{Adams04}, for example), and the range of rectilinear spatial graph types is much narrower than the general spatial graphs. So we are interested in the behavior of the nontrivial Hamiltonian knots in a rectilinear spatial complete graph. Note that every knot (resp. link) contained in a rectilinear spatial graph of $K_{n}$ is a ``polygonal'' knot (resp. link) with less than or equal to $n$ sticks. It is well-known that every polygonal knot with less than or equal to five sticks is trivial (Proposition \ref{stick} (1)). Thus for rectilinear spatial complete graphs, by Theorem \ref{mainthm}  we have the following immediately.

\begin{Theorem}\label{mainthmrecti} 
Let $n\ge 6$ be an integer. For any rectilinear spatial embedding $f_{\rm r}$ of $K_{n}$, we have  
\begin{eqnarray*}
\sum_{\gamma\in \Gamma_{n}\left(K_{n}\right)}a_{2}\left(f_{\rm r}(\gamma)\right)
=  
\frac{(n-5)!}{2} 
\bigg(
\sum_{\lambda\in \Gamma_{3,3}\left(K_{n}\right)}{{\rm lk}\left(f_{\rm r}(\lambda)\right)}^{2}
- \binom{n-1}{5}
\bigg). 
\end{eqnarray*}
\end{Theorem}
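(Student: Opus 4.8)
The plan is to derive Theorem \ref{mainthmrecti} as a direct specialization of Theorem \ref{mainthm}, exploiting the stick-number constraints that force several terms to vanish. First I would invoke Proposition \ref{stick} (1): every polygonal knot with at most five sticks is trivial. Since in a rectilinear spatial embedding $f_{\rm r}$ of $K_n$ every $5$-cycle $\gamma \in \Gamma_5(K_n)$ is sent to a closed polygon with exactly five straight segments, $f_{\rm r}(\gamma)$ is a trivial knot, and hence $a_2\left(f_{\rm r}(\gamma)\right) = 0$ for all $\gamma \in \Gamma_5(K_n)$. Therefore the entire term $(n-5)!\sum_{\gamma\in \Gamma_{5}(K_{n})}a_{2}\left(f_{\rm r}(\gamma)\right)$ on the left-hand side of \eqref{maintheorem} drops out.

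With that term removed, Theorem \ref{mainthm} applied to the particular spatial embedding $f_{\rm r}$ (a rectilinear embedding is in particular a spatial embedding, so the theorem applies verbatim) reads
\begin{eqnarray*}
\sum_{\gamma\in \Gamma_{n}\left(K_{n}\right)}a_{2}\left(f_{\rm r}(\gamma)\right)
=
\frac{(n-5)!}{2}
\bigg(
\sum_{\lambda\in \Gamma_{3,3}\left(K_{n}\right)}{{\rm lk}\left(f_{\rm r}(\lambda)\right)}^{2}
- \binom{n-1}{5}
\bigg),
\end{eqnarray*}
which is precisely the claimed identity. So the proof is essentially one substitution plus one vanishing observation; no genuinely new computation is needed beyond citing the two stated results.

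The only point that deserves a sentence of care is the applicability of the stick bound: one must note that a $5$-cycle in $K_n$, being a circuit of length five, maps under $f_{\rm r}$ to a polygon bounded by five line segments (possibly fewer if consecutive segments happen to be collinear, which only makes the knot still trivial), so Proposition \ref{stick} (1) indeed applies. The main — and in fact essentially the only — obstacle is upstream: it lies entirely in the proof of Theorem \ref{mainthm} itself (and in establishing Proposition \ref{stick} (1)); granting those, Theorem \ref{mainthmrecti} follows immediately. I would close by remarking that this also recovers, for $n = 6$, the statement that the sum of $a_2$ over Hamiltonian knots in a rectilinear $K_6$ equals $\tfrac12\big(\sum \mathrm{lk}^2 - 1\big)$, tying back to Theorem \ref{main1}.
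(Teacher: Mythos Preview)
Your proposal is correct and matches the paper's own argument exactly: the paper also derives Theorem \ref{mainthmrecti} immediately from Theorem \ref{mainthm} by observing that every $5$-cycle in a rectilinear embedding is a polygonal knot with at most five sticks, hence trivial by Proposition \ref{stick} (1), so the $\sum_{\gamma\in\Gamma_5(K_n)} a_2(f_{\rm r}(\gamma))$ term vanishes. Your additional remark about possible collinearity is a nice touch but not needed.
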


Also note that a $2$-component link with exactly six sticks is either a trivial link or a Hopf link (Proposition \ref{stick} (2)). Thus for any rectilinear spatial embedding $f_{\rm r}$ of $K_{n}$, $\sum_{\lambda\in \Gamma_{3,3}\left(K_{n}\right)}{{\rm lk}\left(f_{\rm r}(\lambda)\right)}^{2}$ is equal to the number of ``triangle-triangle'' Hopf links in $f_{\rm r}(K_{n})$. Then, by using Corollary \ref{maincor} and Theorem \ref{mainthmrecti}, we can obtain the following upper and lower bounds of $\sum_{\gamma\in \Gamma_{n}\left(K_{n}\right)}a_{2}\left(f_{\rm r}(\gamma)\right)$.

\begin{Corollary}\label{maincor2} 
Let $n\ge 6$ be an integer. For any rectilinear spatial embedding $f_{\rm r}$ of $K_{n}$, we have   
\begin{eqnarray*}
\frac{(n-5)(n-6)(n-1)!}{2\cdot 6!}
\le 
\sum_{\gamma\in \Gamma_{n}\left(K_{n}\right)}a_{2}\left(f_{\rm r}(\gamma)\right)
\le   
\frac{3(n-2)(n-5)(n-1)!}{2\cdot 6!}. 
\end{eqnarray*}
\end{Corollary}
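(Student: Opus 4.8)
The plan is to obtain the lower bound directly from Corollary \ref{maincor} and the upper bound from the exact evaluation in Theorem \ref{mainthmrecti}, the latter once we bound the number of triangle-triangle Hopf links that a rectilinear $K_{6}$ can carry. For the lower bound, I would apply Corollary \ref{maincor} to the rectilinear embedding $f_{\rm r}$: every $5$-cycle $\gamma$ of $K_{n}$ is sent by $f_{\rm r}$ to a polygonal knot with at most five sticks, hence to the trivial knot by Proposition \ref{stick} (1), so $a_{2}(f_{\rm r}(\gamma))=0$ for all $\gamma\in\Gamma_{5}(K_{n})$. Thus the term $(n-5)!\sum_{\gamma\in\Gamma_{5}(K_{n})}a_{2}(f_{\rm r}(\gamma))$ in Corollary \ref{maincor} vanishes, and the inequality there becomes $\sum_{\gamma\in\Gamma_{n}(K_{n})}a_{2}(f_{\rm r}(\gamma))\ge \frac{(n-5)(n-6)(n-1)!}{2\cdot 6!}$, which is the asserted lower bound.

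For the upper bound, I would start from Theorem \ref{mainthmrecti}, so that it suffices to bound $\sum_{\lambda\in\Gamma_{3,3}(K_{n})}{\rm lk}(f_{\rm r}(\lambda))^{2}$ from above. By Proposition \ref{stick} (2), each $f_{\rm r}(\lambda)$ has at most six sticks and so is a trivial link or a Hopf link; hence every summand is $0$ or $1$ and the sum is exactly the number of triangle-triangle Hopf links of $f_{\rm r}(K_{n})$. Now partition $\Gamma_{3,3}(K_{n})$ by the six vertices spanned by each disjoint pair of triangles: each $\lambda$ lies in $\Gamma_{3,3}(H)$ for a unique complete subgraph $H\cong K_{6}$ of $K_{n}$, and there are $\binom{n}{6}$ such subgraphs, so $\sum_{\lambda\in\Gamma_{3,3}(K_{n})}{\rm lk}(f_{\rm r}(\lambda))^{2}=\sum_{H}\sum_{\lambda\in\Gamma_{3,3}(H)}{\rm lk}(f_{\rm r}(\lambda))^{2}$. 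The crucial input is that a rectilinear $K_{6}$ realizes at most three triangle-triangle Hopf links; equivalently, by Theorem \ref{main1} applied to $f_{\rm r}|_{H}$ together with Proposition \ref{stick} (1) and the fact that the only knots of stick number at most six are the trivial knot and the trefoil (which has $a_{2}=1$), a rectilinear $K_{6}$ contains at most one knotted Hamiltonian cycle. This yields $\sum_{\lambda\in\Gamma_{3,3}(H)}{\rm lk}(f_{\rm r}(\lambda))^{2}\le 3$ for each $H$, hence $\sum_{\lambda\in\Gamma_{3,3}(K_{n})}{\rm lk}(f_{\rm r}(\lambda))^{2}\le 3\binom{n}{6}$; substituting into Theorem \ref{mainthmrecti} and simplifying via $3\binom{n}{6}-\binom{n-1}{5}=\frac{(n-1)!\,(n-2)}{240\,(n-6)!}$ and $\frac{(n-5)!}{(n-6)!}=n-5$ gives exactly $\frac{3(n-2)(n-5)(n-1)!}{2\cdot 6!}$.

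I expect the main obstacle to be precisely the bound of three on the number of triangle-triangle Hopf links in a rectilinear $K_{6}$ — equivalently, the bound of one on the number of knotted Hamiltonian hexagons. This is a genuine statement about the combinatorics of six points in ${\mathbb R}^{3}$ and, in contrast to the trivializations recorded in Proposition \ref{stick}, does not follow from stick-number considerations alone; it must be established (or carefully cited) as a separate ingredient. By comparison, the reduction of the $\Gamma_{5}$-sum to zero, the partition over $K_{6}$-subgraphs, and the binomial-coefficient arithmetic needed to match the closed forms in the statement are all routine.
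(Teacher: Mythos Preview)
Your argument is correct and follows essentially the same route as the paper: the lower bound comes from Corollary~\ref{maincor} after killing the $\Gamma_{5}$-sum via Proposition~\ref{stick}~(1), and the upper bound comes from Theorem~\ref{mainthmrecti} after bounding $\sum_{\lambda\in\Gamma_{3,3}(K_{n})}{\rm lk}(f_{\rm r}(\lambda))^{2}\le 3\binom{n}{6}$ by partitioning over $K_{6}$-subgraphs. The ingredient you correctly flag as needing external input---that a rectilinear $K_{6}$ carries at most three triangle-triangle Hopf links---is exactly what the paper invokes by citation (Hughes, Huh--Jeon, Nikkuni) rather than reproving; your reformulation via Theorem~\ref{main1} as ``at most one knotted Hamiltonian hexagon'' is equivalent but does not remove the need for that citation, as you yourself note.
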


The lower bound in Corollary \ref{maincor2} is also sharp, see Remark \ref{recrec}. However, the authors expect that the upper bound is not sharp if $n\ge 7$, see Example \ref{ub}.

For every spatial embedding $f$ of $K_{n}$ (which does not need to be rectilinear), Hirano showed that there exist at least three nontrivial Hamiltonian knots with an odd value of $a_{2}$ in $f\left(K_{8}\right)$ \cite{Hirano10}, and Foisy showed that there exist at least $(n-1)(n-2)\cdots 9\cdot 8$ nontrivial Hamiltonian knots with an odd value of $a_{2}$ in $f\left(K_{n}\right)$ if $n\ge 9$ \cite{BBFHL07}. On the other hand, Corollary \ref{maincor2}  makes us possible to evaluate the number of nontrivial Hamiltonian knots with a positive value of $a_{2}$ in a rectilinear spatial graph of $K_{n}$ as follows.

\begin{Corollary}\label{maincor3} 
Let $n\ge 7$ be an integer. The minimum number of nontrivial Hamiltonian knots with a positive value of $a_{2}$ in every rectilinear spatial graph of $K_{n}$ is at least 
\begin{eqnarray*}
r_{n} = 
\left\lceil 
\frac{
(n-5)(n-6)(n-1)! / (2\cdot 6!)
}
{
\left\lfloor
(n-3)^{2}(n-4)^{2} / 32
\right\rfloor
}
\right\rceil, 
\end{eqnarray*}
where $\lceil\cdot\rceil$ and $\lfloor\cdot\rfloor$ denote  the ceiling function and the floor function, respectively. 
\end{Corollary}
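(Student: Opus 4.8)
The plan is to combine the lower bound in Corollary~\ref{maincor2} with a uniform upper bound for $a_{2}(f_{\rm r}(\gamma))$ that holds for every Hamiltonian cycle $\gamma$ of $K_{n}$, and then to double count.

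First I would recast the statement as an inequality for a single sum. Let $P$ be the set of Hamiltonian cycles $\gamma$ of $K_{n}$ with $a_{2}(f_{\rm r}(\gamma))>0$, and put $k=\abs{P}$; the claim is that $k\ge r_{n}$. Since $a_{2}$ of a knot is an integer and $a_{2}$ of the trivial knot is zero, every $\gamma\in\Gamma_{n}(K_{n})\setminus P$ contributes $a_{2}(f_{\rm r}(\gamma))\le 0$, so discarding those terms only increases the total:
\[
\sum_{\gamma\in P}a_{2}\bigl(f_{\rm r}(\gamma)\bigr)\ \ge\ \sum_{\gamma\in\Gamma_{n}(K_{n})}a_{2}\bigl(f_{\rm r}(\gamma)\bigr)\ \ge\ \frac{(n-5)(n-6)(n-1)!}{2\cdot 6!},
\]
the last inequality being the lower bound of Corollary~\ref{maincor2}.

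Next I would bound each term on the left uniformly in $\gamma$. For $\gamma\in P$ the knot $f_{\rm r}(\gamma)$ is nontrivial, and since $f_{\rm r}$ is rectilinear and $\gamma$ has $n$ edges it is a polygonal knot with at most $n$ sticks, so its stick number is at most $n$. By Negami's inequality between the stick number $s(K)$ and the crossing number $c(K)$ of a nontrivial knot, which yields $c(K)\le\binom{s(K)-3}{2}$, and by the monotonicity of $m\mapsto\binom{m-3}{2}$, we get $c\bigl(f_{\rm r}(\gamma)\bigr)\le\binom{n-3}{2}=(n-3)(n-4)/2$. Inserting this into the known estimate $\abs{a_{2}(K)}\le c(K)^{2}/8$ and rounding down (as $a_{2}$ is an integer) gives
\[
a_{2}\bigl(f_{\rm r}(\gamma)\bigr)\ \le\ \left\lfloor\frac{(n-3)^{2}(n-4)^{2}}{32}\right\rfloor
\]
for every $\gamma\in P$. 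Combining the two displays yields
\[
\frac{(n-5)(n-6)(n-1)!}{2\cdot 6!}\ \le\ k\left\lfloor\frac{(n-3)^{2}(n-4)^{2}}{32}\right\rfloor ,
\]
and dividing through, together with the fact that $k$ is a nonnegative integer, forces $k\ge r_{n}$, which is the assertion.

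The step that requires care is the uniform per-term estimate: one must verify that composing Negami's stick-versus-crossing bound with the crossing-versus-$a_{2}$ bound lands \emph{exactly} on $\left\lfloor(n-3)^{2}(n-4)^{2}/32\right\rfloor$ --- which it does, since $\frac{1}{8}\bigl((n-3)(n-4)/2\bigr)^{2}=(n-3)^{2}(n-4)^{2}/32$ --- and that Negami's inequality is used only for nontrivial knots, which is legitimate precisely because the cycles counted by $k$ satisfy $a_{2}(f_{\rm r}(\gamma))\neq 0$. The remainder is routine bookkeeping.
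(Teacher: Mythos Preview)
Your proof is correct and follows essentially the same route as the paper: bound the total $\sum_{\gamma\in\Gamma_n(K_n)}a_2(f_{\rm r}(\gamma))$ from below by Corollary~\ref{maincor2}, bound each positive term from above via a stick--crossing inequality composed with the Polyak--Viro estimate $a_2(K)\le c(K)^2/8$, and divide. The only minor slip is one of attribution: the inequality $c(K)\le\binom{s(K)-3}{2}=\frac{(s(K)-3)(s(K)-4)}{2}$ that you quote is Calvo's sharpening \cite[Theorem~4]{Calvo01}; Negami's original bound \cite{Negami91} gives the slightly weaker $c(K)\le\frac{(s(K)-1)(s(K)-4)}{2}$, which would not produce the stated denominator $\lfloor(n-3)^2(n-4)^2/32\rfloor$. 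Since you wrote down the correct inequality, the argument goes through as written once the citation is adjusted.
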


We see that  $r_{n}$ is greater than Foisy's lower bound of the minimum number of nontrivial Hamiltonian knots with an odd value of $a_{2}$ if $n=9,10,11$, see Remark \ref{fh}.

The paper is organized as follows. We shall devote Section 2 to proofs of Theorem \ref{mainthm} and Corollaries \ref{maincor}, \ref{maincor0}, \ref{maincor2} and \ref{maincor3}. In Section $3$, we give  examples and present some open problems.

\section{Proofs of Theorem \ref{mainthm} and its Corollaries} 

We show some lemmas which are needed to prove Theorem \ref{mainthm}.

\begin{Lemma}\label{nlem}
\begin{enumerate}
\item Let $n\ge 6$ be an integer. For any spatial embedding $f$ of $K_{n}$, we have 
\begin{eqnarray*}
2\sum_{\gamma\in\Gamma_{6}\left(K_{n}\right)}a_{2}\left(f(\gamma)\right)  
- 2(n-5) \sum_{\gamma\in\Gamma_{5}\left(K_{n}\right)}a_{2}\left(f(\gamma)\right) 
= \sum_{\lambda\in\Gamma_{3,3}\left(K_{n}\right)}{\rm lk}\left(f(\lambda)\right)^{2}
- \binom{n}{6}. 
\end{eqnarray*}
\item Let $n\ge 7$ be an integer. For any spatial embedding $f$ of $K_{n}$, we have 
\begin{eqnarray*}
\sum_{\lambda\in\Gamma_{3,4}\left(K_{n}\right)}{\rm lk}\left(f(\lambda)\right)^{2}
= 2(n-6)\sum_{\lambda\in\Gamma_{3,3}\left(K_{n}\right)}{\rm lk}\left(f(\lambda)\right)^{2}. 
\end{eqnarray*}
\end{enumerate}
\end{Lemma}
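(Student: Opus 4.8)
The plan is to reduce both statements to the known $K_6$- and $K_7$-identities by a counting argument over induced subgraphs, exactly the device that makes "Conway--Gordon type" congruences propagate from small complete graphs to large ones. For part (1), I would start from the integral Conway--Gordon formula for $K_6$, i.e.\ Theorem \ref{main1} rewritten in the form
\begin{eqnarray*}
2\sum_{\gamma\in\Gamma_{6}(H)}a_{2}(f(\gamma))
-2\sum_{\gamma\in\Gamma_{5}(H)}a_{2}(f(\gamma))
=\sum_{\lambda\in\Gamma_{3,3}(H)}{\rm lk}(f(\lambda))^{2}-1
\end{eqnarray*}
valid for every $6$-vertex complete subgraph $H$ of $K_n$ (with the restricted embedding $f|_H$). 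Summing this identity over all $\binom{n}{6}$ such subgraphs $H$, each Hamiltonian $6$-cycle of $K_n$ lies in exactly one $H$, each $5$-cycle of $K_n$ lies in exactly $n-5$ of them, each pair in $\Gamma_{3,3}(K_n)$ (using $6$ vertices) lies in exactly one, and the constant $1$ is counted $\binom{n}{6}$ times. Collecting these multiplicities gives precisely the displayed formula in part (1). The only thing to check carefully is the claim that every $5$-cycle and every $3{+}3$ pair sits in the asserted number of $6$-vertex subsets — but these are immediate: a $5$-cycle uses $5$ vertices and we choose one more from the remaining $n-5$; a $\Gamma_{3,3}$ pair already uses all $6$ vertices of its host $K_6$.

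For part (2), I would use the same averaging idea but now over $7$-vertex complete subgraphs, combined with a linking-number analogue rather than the $a_2$-statement. The key input is that for a spatial $K_7$ the squared linking numbers over $\Gamma_{3,3}$ and over $\Gamma_{3,4}$ satisfy a fixed linear relation; concretely, in any $7$-vertex complete subgraph $H$ one has
\begin{eqnarray*}
\sum_{\lambda\in\Gamma_{3,4}(H)}{\rm lk}(f(\lambda))^{2}
=2\sum_{\lambda\in\Gamma_{3,3}(H)}{\rm lk}(f(\lambda))^{2},
\end{eqnarray*}
which itself follows from the $K_7$ version of the integral Conway--Gordon relations recorded in Theorem \ref{K331} (the Nikkuni $K_7$ lift). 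Granting this for $n=7$, summing over all $\binom{n}{7}$ seven-vertex subsets of $K_n$ and dividing out common multiplicities yields the statement for general $n\ge7$: a pair in $\Gamma_{3,3}(K_n)$ occupies $6$ vertices and hence lies in $\binom{n-6}{1}=n-6$ of the seven-vertex subsets, while a pair in $\Gamma_{3,4}(K_n)$ occupies all $7$, so lies in exactly one; matching $1\cdot\sum\Gamma_{3,4}$ with $2(n-6)\cdot\sum\Gamma_{3,3}$ gives the claimed identity.

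I expect the routine part — the binomial bookkeeping of how many $k$-vertex complete subgraphs contain a given cycle or pair of cycles — to be entirely mechanical. The genuine content, and the step I would flag as the real obstacle, is establishing the base cases: identity (1) is a direct citation of Theorem \ref{main1}, so no work is needed there, but the $n=7$ seed relation $\sum_{\Gamma_{3,4}}{\rm lk}^{2}=2\sum_{\Gamma_{3,3}}{\rm lk}^{2}$ in part (2) must be extracted from the $K_7$ integral Conway--Gordon formulas. If those formulas are available in the form stated (as Theorem \ref{K331} promises), this is a short linear-algebra manipulation; if not, one would need to reprove the relevant $K_7$ relation from scratch using the standard Conway--Gordon argument (edge-homotopy invariance of $\sum{\rm lk}^2$-type quantities together with the behavior under a crossing change or a triangle-square replacement on $7$ vertices), which is where the only nontrivial topology in the lemma resides.
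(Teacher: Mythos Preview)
Your strategy is exactly the paper's: both parts are obtained by summing the base identity over all copies of $K_6$ (for (1)) or $K_7$ (for (2)) inside $K_n$, with precisely the multiplicity bookkeeping you describe. Part (1) is complete as you wrote it.

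The one place to tighten is the $n=7$ seed $\sum_{\Gamma_{3,4}}{\rm lk}^2 = 2\sum_{\Gamma_{3,3}}{\rm lk}^2$ for part (2). This is \emph{not} a short linear-algebra consequence of Theorem~\ref{K331}(1) (the Nikkuni $K_7$ lift) alone: that formula together with part (1) of the present lemma at $n=7$ gives only two linear relations among the five quantities $\sum_{\Gamma_7}a_2$, $\sum_{\Gamma_6}a_2$, $\sum_{\Gamma_5}a_2$, $\sum_{\Gamma_{3,4}}{\rm lk}^2$, $\sum_{\Gamma_{3,3}}{\rm lk}^2$, which is not enough to eliminate all three $a_2$-sums. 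The paper states the seed as a separate result (Theorem~\ref{lk34}) and proves it by also invoking O'Donnol's $K_{3,3,1}$ identity, Theorem~\ref{K331}(2): summing that over the seventy $K_{3,3,1}$-subgraphs of $K_7$ yields a third independent relation (equation (\ref{331e})), and combining it with Theorem~\ref{K331}(1) and with part (1) at $n=7$ then gives the desired $\Gamma_{3,4}$--$\Gamma_{3,3}$ identity. So your outline is correct, but you should plan to use \emph{both} parts of Theorem~\ref{K331}, and the extraction of the seed is itself another small averaging argument (over $K_{3,3,1}$'s in $K_7$) rather than pure linear algebra on what you already have.
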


\begin{proof}[Proof of Lemma \ref{nlem} (1)]
Note that each $5$-cycle of $K_{n}$ is shared by exactly $n-5$ subgraphs isomorphic to $K_{6}$ if $n\ge 6$. Then by applying Theorem \ref{main1} to the embedding $f$ restricted to each of the subgraphs of $K_{n}$ isomorphic to $K_{6}$ and taking the sum of both sides of (\ref{k6ref}) over all of them, we have the result. 
\end{proof}

In order to prove Lemma \ref{nlem} (2), we recall integral Conway-Gordon type theorems for spatial embeddings of $K_{7}$ and $K_{3,3,1}$ which were proven by the second author \cite{Nikkuni09} and O'Donnol \cite{Daniella15}, respectively. Here, the {\it complete $k$-partite graph} $K_{n_{1},n_{2},\ldots,n_{k}}$ is the graph whose vertex set can be decomposed into $k$ mutually disjoint nonempty sets $V_{1},V_{2},\ldots,V_{k}$ where the number of elements in $V_{i}$ equals $n_{i}$ such that no two vertices in $V_{i}$ are connected by an edge and every pair of vertices in distinct sets $V_{i}$ and $V_{j}$ is connected by exactly one edge. See Fig. \ref{K33K331} which illustrates $K_{3,3}$ and $K_{3,3,1}$. In particular for $K_{3,3,1}$, let us denote the subgraph of $K_{3,3,1}$ which is isomorphic to $K_{3,3}$ and does not contain the vertex $u$ by $H$. 

\begin{Theorem}\label{K331} 
\begin{enumerate}
\item {\rm (Nikkuni \cite{Nikkuni09})} For any spatial embedding $f$ of $K_{7}$, we have 
\begin{eqnarray*}
&&7\sum_{\gamma\in \Gamma_{7}\left(K_{7}\right)}a_{2}\left(f(\gamma)\right)
- 6\sum_{\gamma\in \Gamma_{6}\left(K_{7}\right)}a_{2}\left(f(\gamma)\right)
- 2\sum_{\gamma\in \Gamma_{5}\left(K_{7}\right)}a_{2}\left(f(\gamma)\right)\\
&=& 2\sum_{\lambda\in \Gamma_{3,4}\left(K_{7}\right)}{{\rm lk}\left(f(\lambda)\right)}^{2}
- 21.
\end{eqnarray*}
\item {\rm (O'Donnol \cite{Daniella15})} For any spatial embedding $f$ of $K_{3,3,1}$, we have 
\begin{eqnarray*}
&& 2\sum_{\gamma\in \Gamma_{7}\left(K_{3,3,1}\right)}a_{2}\left(f(\gamma)\right)
-4\sum_{\gamma\in \Gamma_{6}\left(H\right)}a_{2}\left(f(\gamma)\right)
-2\sum_{\gamma\in \Gamma_{5}\left(K_{3,3,1}\right)}a_{2}\left(f(\gamma)\right)\\
&=& \sum_{\lambda\in \Gamma_{3,4}\left(K_{3,3,1}\right)}{{\rm lk}\left(f(\lambda)\right)}^{2}-1.
\end{eqnarray*}
\end{enumerate}
\end{Theorem}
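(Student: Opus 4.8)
The plan is to prove both identities by the Conway--Gordon strategy: I would show that the indicated linear combination of summed invariants is unchanged when the spatial embedding is altered, and then compute its value on a single explicit embedding.

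The only analytic inputs are two consequences of the skein relation $\nabla_{K_+} - \nabla_{K_-} = z\nabla_{K_0}$ for the Conway polynomial. First, if a knot undergoes a crossing change at one of its self-crossings, then $a_2(K_+) - a_2(K_-) = {\rm lk}(L)$, where $L$ is the oriented smoothing of that crossing, a $2$-component link. Second, if a crossing between the two components of a $2$-component link is changed, then its linking number changes by $\pm 1$, so the square of the linking number changes by $\pm 2\,{\rm lk} + 1$. Everything else in the argument is combinatorial.

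For part (1) I would set $\Phi(f)$ equal to the left-hand side minus $2\sum_{\lambda\in\Gamma_{3,4}(K_7)}{\rm lk}(f(\lambda))^2$, and define the analogous $\Phi$ for part (2). Since any two spatial embeddings of a fixed graph differ by a finite sequence of ambient isotopies and crossing changes between pairs of edges, it suffices to check that $\Phi$ is invariant under a single crossing change between two edges $e$ and $e'$. I would split into two cases. If $e$ and $e'$ are adjacent, then no pair of disjoint cycles contains both, so the ${\rm lk}^2$-sum is untouched and only the $a_2$-sums over $\Gamma_7,\Gamma_6,\Gamma_5$ vary; here the coefficients $7,6,2$ (respectively $2,4,2$) must make these variations cancel among themselves. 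If $e$ and $e'$ are disjoint, both kinds of sum change: each affected $a_2$-term contributes, by the first skein fact, the linking number of a smoothed sublink, while each affected ${\rm lk}^2$-term contributes a $\pm 2\,{\rm lk} + 1$ term, and the two must cancel. Both cancellations reduce to counting, for a fixed pair $\{e,e'\}$, how many $k$-cycles and how many $(3,4)$-pairs of the graph contain both edges, and matching the linking contributions produced by the two skein facts.

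Finally, having established that $\Phi$ is independent of $f$, I would evaluate it on one standard embedding. For $K_7$ a convenient book (or canonical planar-diagram) embedding lets one read off all of its Hamiltonian knots together with its $5$-cycles, $6$-cycles, and $(3,4)$-links directly, yielding the constant $-21$; for $K_{3,3,1}$ an analogous model embedding, exploiting that $K_{3,3,1}$ lies in the Petersen family and contains the distinguished $K_{3,3}$-subgraph $H$, yields $-1$. I expect the main obstacle to be the invariance step: one must carry out the bookkeeping of the two edge-configuration cases, keep careful track of orientations and signs in the $a_2$-change formula, and verify that the linking-number terms created by the $a_2$-skein relation cancel exactly against those created by the ${\rm lk}^2$-change formula. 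Pinning down these combinatorial counts is what both forces and confirms the precise coefficients appearing in each identity.
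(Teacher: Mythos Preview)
The paper does not give its own proof of this theorem: both parts are quoted from the literature (part (1) from \cite{Nikkuni09}, part (2) from \cite{Daniella15}) and are used as black-box inputs to the proof of Theorem~\ref{lk34}. So there is no in-paper argument to compare your proposal against.

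That said, your outline is precisely the strategy used in those original sources. The two skein facts you isolate are correct, the reduction to invariance under a single edge-pair crossing change is the right move, and the case split into adjacent versus disjoint edge pairs is the standard organisation. You have also correctly identified where the content lies: in the disjoint case the $a_2$-variations produce linking numbers of smoothed sublinks that must be matched against the $\pm 2\,{\rm lk}+1$ variations of the ${\rm lk}^2$-terms, and carrying this out is what pins down the coefficients. One point worth flagging for part (2): the coefficient $-4$ sits on $\Gamma_6(H)$, not on all of $\Gamma_6(K_{3,3,1})$, so in the adjacent-edge case you must separately track which $6$-cycles lie in the distinguished $K_{3,3}$ subgraph $H$; this asymmetry is easy to lose in the bookkeeping. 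Beyond that, your plan is sound and would reproduce the cited proofs.
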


\begin{figure}[htbp]
      \begin{center}
\scalebox{0.575}{\includegraphics*{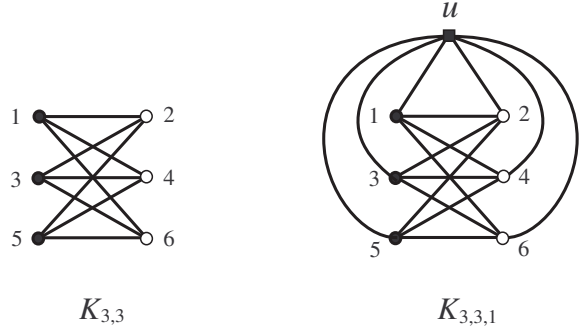}}
      \end{center}
   \caption{$K_{3,3}$ and $K_{3,3,1}$}
  \label{K33K331}
\end{figure} 

Then by applying Theorem \ref{K331} (2) to each of the subgraphs of $K_{7}$ isomorphic to $K_{3,3,1}$ and combining with Theorem \ref{K331} (1), we also have the following equation for every spatial embedding of $K_{7}$.

\begin{Theorem}\label{lk34}
For any spatial embedding $f$ of $K_{7}$, we have 
\begin{eqnarray}\label{34ref}
\sum_{\lambda\in \Gamma_{3,4}\left(K_{7}\right)}{{\rm lk}\left(f(\lambda)\right)}^{2} = 2\sum_{\lambda\in \Gamma_{3,3}\left(K_{7}\right)}{{\rm lk}\left(f(\lambda)\right)}^{2}.
\end{eqnarray}
\end{Theorem}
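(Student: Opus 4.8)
The plan is to apply Theorem \ref{K331} (2) to each of the $70$ subgraphs of $K_{7}$ isomorphic to $K_{3,3,1}$, to sum the resulting equations, and then to cancel every $a_{2}$-term by combining this with Theorem \ref{K331} (1) and with Lemma \ref{nlem} (1) for $n=7$ (which is just Theorem \ref{main1} summed over the subgraphs of $K_{7}$ isomorphic to $K_{6}$). What remains after the cancellation is precisely the asserted identity
\[
\sum_{\lambda\in\Gamma_{3,4}(K_{7})}{\rm lk}(f(\lambda))^{2}=2\sum_{\lambda\in\Gamma_{3,3}(K_{7})}{\rm lk}(f(\lambda))^{2}.
\]

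The heart of the argument is the combinatorial bookkeeping, so I would settle that first. A subgraph of $K_{7}$ isomorphic to $K_{3,3,1}$ is determined by its unique vertex of degree $6$ (the apex $u$) together with an \emph{unordered} partition of the other six vertices into two triples, so there are $7\cdot\tfrac{1}{2}\binom{6}{3}=70$ of them; for such a subgraph $G$ write $u=u(G)$ for its apex and $H=H(G)$ for the copy of $K_{3,3}$ in $G$ not containing $u$. When Theorem \ref{K331} (2) is summed over all $70$ choices of $G$, each cycle or pair of cycles of $K_{7}$ occurring in the formula is counted a fixed number of times, and the relevant multiplicities are: each $\lambda\in\Gamma_{3,4}(K_{7})$ lies in $\Gamma_{3,4}(G)$ for exactly $6$ of the $G$; each $\gamma\in\Gamma_{7}(K_{7})$ lies in $\Gamma_{7}(G)$ for exactly $7$ of them; each $\gamma\in\Gamma_{6}(K_{7})$ lies in $\Gamma_{6}(H(G))$ for exactly $1$ of them; and each $\gamma\in\Gamma_{5}(K_{7})$ lies in $\Gamma_{5}(G)$ for exactly $10$ of them. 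Each of these is a short count: a $6$-cycle misses one vertex, which is forced to be $u$, and a $6$-cycle is bipartite with a unique bipartition into two triples, so $G$ is pinned down; a Hamiltonian $7$-cycle always contains $u$, and deleting $u$ leaves a $6$-vertex path with a unique (up to swap) proper $2$-coloring, which gives the unordered partition; the counts for $\Gamma_{3,4}$ and $\Gamma_{5}$ are analogous, using that every triangle---indeed every odd cycle---of $K_{3,3,1}$ passes through $u$, and that in the $5$-cycle case the two vertices outside the cycle can be distributed among the two triples in $2$ ways. Each multiplicity is cross-checked by a double count, e.g.\ $70\,\abs{\Gamma_{3,4}(K_{3,3,1})}=70\cdot 9=630=6\cdot 105=6\,\abs{\Gamma_{3,4}(K_{7})}$.

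Granting the multiplicities, set $S_{k}=\sum_{\gamma\in\Gamma_{k}(K_{7})}a_{2}(f(\gamma))$ for $k=5,6,7$ and let $L_{3,3},L_{3,4}$ be the two sums of squared linking numbers. Summing Theorem \ref{K331} (2) over the $70$ subgraphs yields
\[
14S_{7}-4S_{6}-20S_{5}=6L_{3,4}-70,
\]
Theorem \ref{K331} (1) is
\[
7S_{7}-6S_{6}-2S_{5}=2L_{3,4}-21,
\]
and Lemma \ref{nlem} (1) with $n=7$ is
\[
2S_{6}-4S_{5}=L_{3,3}-7.
\]
Halving the first equation and subtracting the second eliminates $S_{7}$ and gives $4S_{6}-8S_{5}=L_{3,4}-14$; doubling the third gives $4S_{6}-8S_{5}=2L_{3,3}-14$. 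Comparing the two left-hand sides forces $L_{3,4}=2L_{3,3}$, which is the statement.

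The only genuine difficulty is the second step: getting the four multiplicities $6,7,1,10$ exactly right. The points that require care are that the two triples of a $K_{3,3,1}$-subgraph are unordered---so a proper $2$-coloring of a cycle or path should be counted as a single partition, not two---and that one must distinguish the cycles of $K_{7}$ that can lie inside the bipartite part $H(G)$ from those that can only lie in the whole of $G$ (this is why the $6$-cycle multiplicity is $1$ rather than larger). Once those counts are secured, only the short linear elimination above is left.
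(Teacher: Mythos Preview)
Your argument is correct and matches the paper's proof essentially line for line: the same $70$ copies of $K_{3,3,1}$, the same multiplicities $7,1,10,6$, the summed identity $14S_{7}-4S_{6}-20S_{5}=6L_{3,4}-70$, and then the same elimination against Theorem~\ref{K331}~(1) and Lemma~\ref{nlem}~(1) to reach $4S_{6}-8S_{5}=L_{3,4}-14=2L_{3,3}-14$. Your justification of the multiplicities (via the unique bipartition of even cycles and the double-count cross-checks) is, if anything, slightly more detailed than the paper's.
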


\begin{proof}[Proof of Theorem \ref{lk34}]
For vertices $1,2,\ldots,6$ and $u$ of $K_{3,3,1}$, we call the vertices $1,3,5$ the {\it black vertices}, the vertices $2,4,6$ the {\it white vertices} and the vertex $u$ the {\it square vertex}. Note that a $k$-cycle of $K_{3,3,1}$ contains the square vertex if $k$ is odd. There are exactly seventy subgraphs $G_{i}\ (i=1,2,\ldots,70)$ of $K_{7}$ isomorphic to $K_{3,3,1}$, because there are seven ways to choose the square vertex and $\frac{1}{2}\binom{6}{3}$ ways to choose the remaining black and white vertices. Then for a spatial embedding $f$ of $K_{7}$, by applying Theorem \ref{K331} (2) to the embedding $f$ restricted to $G_{i}$, we have 
\begin{eqnarray}\label{gk331}
&& 2\sum_{\gamma\in \Gamma_{7}\left(G_{i}\right)}a_{2}\left(f(\gamma)\right)
-4\sum_{\gamma\in \Gamma_{6}(H_{i})}a_{2}\left(f(\gamma)\right)
-2\sum_{\gamma\in \Gamma_{5}\left(G_{i}\right)}a_{2}\left(f(\gamma)\right)\\
&=& \sum_{\lambda\in \Gamma_{3,4}\left(G_{i}\right)}{{\rm lk}\left(f(\lambda)\right)}^{2}-1, \nonumber
\end{eqnarray}
where $H_{i}$ is the subgraph of $G_{i}$ isomorphic to $K_{3,3}$ not containing  the square vertex $(i=1,2,\ldots,70)$. Let us take the sum of both sides of (\ref{gk331}) for all $i$. Since each $7$-cycle $\gamma$ of $K_{7}$ is shared by exactly seven $G_{i}$'s (there are seven ways to choose the square vertex from the vertices of $\gamma$ and then the assignment of the black and white vertices is uniquely determined),  we have 
\begin{eqnarray}\label{331a}
\sum_{i=1}^{70}\sum_{\gamma\in \Gamma_{7}\left(G_{i}\right)}a_{2}\left(f(\gamma)\right) = 7\sum_{\gamma\in \Gamma_{7}\left(K_{7}\right)}a_{2}\left(f(\gamma)\right). 
\end{eqnarray}
Since for each  $6$-cycle $\gamma$ of $K_{7}$ there exists the unique $G_{i}$ such that $H_{i}$ contains $\gamma$ (the assignment of the black and white vertices is uniquely determined), we have   
\begin{eqnarray}\label{331b}
\sum_{i=1}^{70}\sum_{\gamma\in \Gamma_{6}(H_{i})}a_{2}\left(f(\gamma)\right) = \sum_{\gamma\in \Gamma_{6}\left(K_{7}\right)}a_{2}\left(f(\gamma)\right). 
\end{eqnarray}
Since each $5$-cycle $\gamma$ of $K_{7}$ is shared by exactly ten $G_{i}$'s (there are five ways to choose the square vertex from the vertices of $\gamma$ and two ways to choose the remaining black and white vertices),  we have 
\begin{eqnarray}\label{331c}
\sum_{i=1}^{70}\sum_{\gamma\in \Gamma_{5}\left(G_{i}\right)}a_{2}\left(f(\gamma)\right) = 10\sum_{\gamma\in \Gamma_{5}\left(K_{7}\right)}a_{2}\left(f(\gamma)\right). 
\end{eqnarray}
Since each pair of two disjoint cycles $\lambda$ in $\Gamma_{3,4}\left(K_{7}\right)$ is shared by exactly six $G_{i}$'s (there are three ways to choose the square vertex from the 3-cycle in $\lambda$ and two ways to choose the remaining black and white vertices),  we have 
\begin{eqnarray}\label{331d}
\sum_{i=1}^{70}\sum_{\lambda\in \Gamma_{3,4}\left(G_{i}\right)}{\rm lk}\left(f(\lambda)\right)^{2} = 6\sum_{\lambda\in \Gamma_{3,4}\left(K_{7}\right)}{\rm lk}\left(f(\lambda)\right)^{2}. 
\end{eqnarray}
Thus by combining (\ref{331a}), (\ref{331b}), (\ref{331c}) and (\ref{331d}) with (\ref{gk331}), we have 
\begin{eqnarray}\label{331e}
&&7\sum_{\gamma\in \Gamma_{7}\left(K_{7}\right)}a_{2}\left(f(\gamma)\right)
- 2\sum_{\gamma\in \Gamma_{6}\left(K_{7}\right)}a_{2}\left(f(\gamma)\right)
- 10\sum_{\gamma\in \Gamma_{5}\left(K_{7}\right)}a_{2}\left(f(\gamma)\right)\\
&=& 3\sum_{\lambda\in \Gamma_{3,4}\left(K_{7}\right)}{{\rm lk}\left(f(\lambda)\right)}^{2}
- 35.\nonumber
\end{eqnarray}
Then by (\ref{331e}) and Theorem \ref{K331} (1), we have   
\begin{eqnarray}\label{331f}
\ \ \ \ \ \ 4\sum_{\gamma\in\Gamma_{6}\left(K_{7}\right)}a_{2}\left(f(\gamma)\right)  
- 8 \sum_{\gamma\in\Gamma_{5}\left(K_{7}\right)}a_{2}\left(f(\gamma)\right) 
= \sum_{\lambda\in\Gamma_{3,4}\left(K_{7}\right)}{\rm lk}\left(f(\lambda)\right)^{2}
- 14. 
\end{eqnarray}
On the other hand, by Lemma \ref{nlem} (1) we have 
\begin{eqnarray}\label{331g}
\ \ \ \ \ 2\sum_{\gamma\in\Gamma_{6}\left(K_{7}\right)}a_{2}\left(f(\gamma)\right)  
- 4 \sum_{\gamma\in\Gamma_{5}\left(K_{7}\right)}a_{2}\left(f(\gamma)\right) 
= \sum_{\lambda\in\Gamma_{3,3}\left(K_{7}\right)}{\rm lk}\left(f(\lambda)\right)^{2} - 7. 
\end{eqnarray}
By (\ref{331f}) and (\ref{331g}), we have the desired conclusion. 
\end{proof}

\begin{proof}[Proof of Lemma \ref{nlem} (2)]
Note that each pair of two disjoint $3$-cycles of $K_{n}$ is shared by exactly $n-6$ subgraphs isomorphic to $K_{7}$ if $n\ge 7$. Then by applying Theorem \ref{lk34} to the embedding $f$ restricted to each of the subgraphs of $K_{n}$ isomorphic to $K_{7}$ and taking the sum of both sides of (\ref{34ref}) over all of them, we have the result. 
\end{proof}

Now we show a lemma which plays a major role in the proof of  Theorem \ref{mainthm}. The proof  is in the same spirit as that of Theorem \ref{K331} (1) in \cite{Nikkuni09}.

\begin{Lemma}\label{ind}
Let $n\ge 7$ be an integer. Assume that there exist three constants  $b,c$ and $d$ such that 
\begin{eqnarray*}
\sum_{\gamma\in \Gamma_{n-1}\left(K_{n-1}\right)}a_{2}\left(g(\gamma)\right)
+ b\sum_{\gamma\in \Gamma_{5}\left(K_{n-1}\right)}a_{2}\left(g(\gamma)\right) 
=  
c\sum_{\lambda\in \Gamma_{3,3}\left(K_{n-1}\right)}{{\rm lk}\left(g(\lambda)\right)}^{2} + d 
\end{eqnarray*}
for any spatial embedding $g$ of $K_{n-1}$. Then we have 
\begin{eqnarray*}
&& \sum_{\gamma\in \Gamma_{n}\left(K_{n}\right)}a_{2}\left(f(\gamma)\right)
+ b(n-5)\sum_{\gamma\in \Gamma_{5}\left(K_{n}\right)}a_{2}\left(f(\gamma)\right)\\ 
&=&   
\frac{c(n - 6)(n + 1) - 3b}{n}\sum_{\lambda\in \Gamma_{3,3}\left(K_{n}\right)}{{\rm lk}\left(f(\lambda)\right)}^{2} + d(n-1) + \frac{b}{2}\binom{n - 1}{5}
\end{eqnarray*}
for any spatial embedding $f$ of $K_{n}$. 
\end{Lemma}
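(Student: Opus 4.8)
The plan is to run an averaging argument over all subgraphs of $K_n$ isomorphic to $K_{n-1}$, exactly as in the proofs of Lemma \ref{nlem} and Theorem \ref{lk34}. There are $n$ such subgraphs, one for each choice of deleted vertex; call them $K_{n-1}^{(v)}$ for $v=1,\dots,n$. For a spatial embedding $f$ of $K_n$, apply the hypothesized identity to the restriction $f|_{K_{n-1}^{(v)}}$ and sum over $v$. The core of the argument is then a counting lemma: each $(n-1)$-cycle of $K_n$ lies in exactly one $K_{n-1}^{(v)}$ (the one omitting the unique vertex not on the cycle); each $5$-cycle lies in exactly $n-5$ of them; and each pair $\lambda\in\Gamma_{3,3}(K_n)$ lies in exactly $n-6$ of them. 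So the sum of the left-hand sides telescopes into
\begin{eqnarray*}
\sum_{\gamma\in\Gamma_{n-1}(K_n)}a_2(f(\gamma)) + b(n-5)\sum_{\gamma\in\Gamma_5(K_n)}a_2(f(\gamma))
= c(n-6)\sum_{\lambda\in\Gamma_{3,3}(K_n)}{\rm lk}(f(\lambda))^2 + dn.
\end{eqnarray*}
This is one linear relation among the four quantities $\sum\Gamma_{n-1}\,a_2$, $\sum\Gamma_5\,a_2$, $\sum\Gamma_{3,3}\,{\rm lk}^2$, and the target $\sum\Gamma_n\,a_2$; I will call it $(\star)$.

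The second ingredient is a relation that brings in the Hamiltonian ($n$-cycle) sum. For this I would reuse the same device with $K_{n-1,1}$-type partite subgraphs, or more simply exploit the $K_{3,3,1}$/$K_7$ machinery already packaged in Theorem \ref{K331} and Lemma \ref{nlem} (2). Concretely, Lemma \ref{nlem} (2) gives $\sum_{\Gamma_{3,4}}{\rm lk}^2 = 2(n-6)\sum_{\Gamma_{3,3}}{\rm lk}^2$, and I expect an analogous averaging over $K_{3,3,1}$-subgraphs of $K_n$ (there are $n\cdot\frac12\binom{n-1}{3}$ of them, by the count used in the proof of Theorem \ref{lk34}) to yield a relation linking $\sum_{\Gamma_n}a_2$, $\sum_{\Gamma_{n-1}}a_2$, $\sum_{\Gamma_5}a_2$, and $\sum_{\Gamma_{3,4}}{\rm lk}^2$. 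Combined with Lemma \ref{nlem} (1) (which relates $\sum_{\Gamma_6}a_2$, $\sum_{\Gamma_5}a_2$, $\sum_{\Gamma_{3,3}}{\rm lk}^2$) and the cycle-sharing counts, this should produce a second independent linear relation $(\star\star)$ among $\sum_{\Gamma_n}a_2$, $\sum_{\Gamma_{n-1}}a_2$, $\sum_{\Gamma_5}a_2$, $\sum_{\Gamma_{3,3}}{\rm lk}^2$. The key arithmetic fact I expect to need here is that $\sum_{\gamma\in\Gamma_n(K_n)}$ and $\sum_{\gamma\in\Gamma_{n-1}(K_{n-1}^{(v)})}$ over all $v$ differ by a factor accounting for how an $n$-cycle of $K_n$ fails to sit inside any $K_{n-1}^{(v)}$ — i.e., the $n$-cycle sum does not appear in $(\star)$ at all, which is why a genuinely different construction (one whose Hamiltonian cycles are Hamiltonian in $K_n$) is needed for $(\star\star)$.

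Finally, I would solve the linear system: eliminate $\sum_{\Gamma_{n-1}}a_2$ between $(\star)$ and $(\star\star)$ to express $\sum_{\Gamma_n}a_2 + b(n-5)\sum_{\Gamma_5}a_2$ purely in terms of $\sum_{\Gamma_{3,3}}{\rm lk}^2$ and constants. Matching the coefficient of $\sum_{\Gamma_{3,3}}{\rm lk}^2$ should give $\tfrac{c(n-6)(n+1)-3b}{n}$ and matching the constant should give $d(n-1)+\tfrac b2\binom{n-1}{5}$; the binomial term will emerge from the constant terms $-\binom{n}{6}$ in Lemma \ref{nlem} (1) and the analogous constants in the $K_{3,3,1}$ relation, via the identity $\binom{n}{6}=\tfrac{n}{6}\binom{n-1}{5}$ and similar simplifications. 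The main obstacle, I expect, is getting the second relation $(\star\star)$ right: one must carefully set up which family of subgraphs carries the Hamiltonian cycles of $K_n$ to Hamiltonian cycles of the subgraph, track all the sharing multiplicities (the delicate $5$-cycle and $\Gamma_{3,4}$ counts of the kind appearing in equations (\ref{331a})--(\ref{331d})), and then verify that the resulting $3\times 3$-ish linear algebra collapses to exactly the stated rational coefficients — in particular that the denominator $n$ genuinely divides through, which is the sort of fact that is true but only after the counting constants are all pinned down correctly.
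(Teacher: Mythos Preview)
Your relation $(\star)$ is correct and is indeed part of the paper's argument, but the proposal has a genuine gap in obtaining the second relation $(\star\star)$. Averaging the $K_{3,3,1}$ identity over subgraphs of $K_n$ can only produce a relation among $\sum_{\Gamma_7}a_2$, $\sum_{\Gamma_6}a_2$, $\sum_{\Gamma_5}a_2$, and the linking sums: since $K_{3,3,1}$ has only seven vertices, none of its Hamiltonian cycles are $n$-cycles of $K_n$ when $n\ge 8$. Likewise ``$K_{n-1,1}$-type'' subgraphs are stars and carry no useful cycle structure. So neither suggestion can bring $\sum_{\gamma\in\Gamma_n(K_n)}a_2(f(\gamma))$ into play, and you are left with no way to reach the claimed conclusion.

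The missing idea, which the paper supplies, is an \emph{edge-subdivision} trick. For each vertex $m$ and each pair $i,j\neq m$, let $F_{ij}^{(m)}$ be the subgraph of $K_n$ obtained from $K_{n-1}^{(m)}$ by deleting the edge $\overline{ij}$ and inserting the path $\overline{imj}$. This graph is \emph{homeomorphic} to $K_{n-1}$, so the hypothesis applies to $f|_{F_{ij}^{(m)}}$; but now its ``Hamiltonian'' cycles come in two flavours: genuine $n$-cycles of $K_n$ passing through $\overline{imj}$, and $(n-1)$-cycles of $K_{n-1}^{(m)}$ avoiding $\overline{ij}$. Summing the hypothesis over all $i<j$ (fixed $m$), then using the hypothesis once more on $K_{n-1}^{(m)}$ itself to eliminate the $(n-1)$-cycle sum, and finally summing over $m$, produces a relation that \emph{does} contain $\sum_{\Gamma_n}a_2$ together with $\sum_{\Gamma_6}a_2$, $\sum_{\Gamma_5}a_2$, $\sum_{\Gamma_{3,4}}{\rm lk}^2$, and $\sum_{\Gamma_{3,3}}{\rm lk}^2$. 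At that point your instinct is right: Lemma~\ref{nlem}~(1) removes the $\Gamma_6$ term, Lemma~\ref{nlem}~(2) converts $\Gamma_{3,4}$ to $\Gamma_{3,3}$, and the stated coefficients drop out.
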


\begin{proof}[Proof]
In the following, we denote the edge of $K_{n}$ connecting two distinct vertices $i$ and $j$ by $\overline{ij}$, and denote a path of length $2$ of $K_{n}$ consisting of two edges $\overline{ij}$ and $\overline{jk}$ by $\overline{ijk}$. We denote the subgraph of $K_{n}$ obtained from $K_{n}$ by deleting the vertex $m$ and all of the edges incident to $m$ by $K_{n-1}^{(m)}\ (m = 1,2,\ldots,n)$. Actually $K_{n-1}^{(m)}$ is isomorphic to $K_{n-1}$ for any $m$. For $1\le i<j\le n$ and $i,j\neq m$, let $F_{ij}^{(m)}$ be the subgraph of $K_{n}$ obtained from $K_{n}$ by deleting the edges $\overline{ij}$ and $\overline{mk}$ for all $k$ with $1\le k\le n,\ k\neq i,j$. Note that $F_{ij}^{(m)}$ is homeomorphic to $K_{n-1}$, namely $F_{ij}^{(m)}$ is obtained from $K_{n-1}^{(m)}$ by subdividing the edge $\overline{ij}$ by the vertex $m$, see Fig. \ref{Knsubdivide}.

\begin{figure}[htbp]
      \begin{center}
\scalebox{0.5}{\includegraphics*{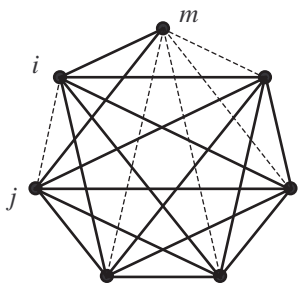}}
      \end{center}
   \caption{$F_{ij}^{(m)}$ ($n = 7$)}
  \label{Knsubdivide}
\end{figure} 

Let $f$ be a spatial embedding of $K_{n}$. Then for the embedding $f$ restricted to $F_{ij}^{(m)}$, by the assumption we have 
\begin{eqnarray}\label{eq1}
&&\sum_{\gamma\in\Gamma_{n}(F_{ij}^{(m)})}a_{2}\left(f(\gamma)\right)
+\sum_{\substack{\gamma\in\Gamma_{n-1}(K_{n-1}^{(m)}) \\ \overline{ij}\not\subset \gamma}} a_{2}\left(f(\gamma)\right)\\
&& +b \Biggl(
\sum_{\substack{\gamma\in\Gamma_{6}(F_{ij}^{(m)}) \\ \overline{imj}\subset \gamma}} a_{2}\left(f(\gamma)\right)
+\sum_{\substack{\gamma\in\Gamma_{5}(K_{n-1}^{(m)}) \\ \overline{ij}\not\subset \gamma}} a_{2}\left(f(\gamma)\right)
\Biggr)\nonumber\\
&=&
c\Biggl(
\sum_{\substack{\lambda=\gamma\cup \gamma'\in\Gamma_{3,4}(F_{ij}^{(m)}) \\ \gamma\in \Gamma_{4}(F_{ij}^{(m)}),\ \gamma'\in \Gamma_{3}(F_{ij}^{(m)}) \\ \overline{imj}\subset \gamma}} {\rm lk}\left(f(\lambda)\right)^{2}
+ \sum_{\substack{\lambda\in\Gamma_{3,3}(K_{n-1}^{(m)}) \\ \overline{ij}\not\subset \lambda}} {\rm lk}\left(f(\lambda)\right)^{2}
\Biggr)+ d. \nonumber
\end{eqnarray}
Let us take the sum of both sides of (\ref{eq1}) over $1\le i<j\le n$ and $i,j\neq m$. For an $n$-cycle $\gamma$ of $K_{n}$, let $i$ and $j$ be the two vertices of $K_{n}$ which are adjacent to $m$ in $\gamma$ ($1\le i<j\le n$ and $i,j\neq m$). Then $\gamma$ is an $n$-cycle of $F_{ij}^{(m)}$. This implies that 
\begin{eqnarray}\label{eq2}
\sum_{\substack{1\le i<j\le n \\ i,j\neq m}}\sum_{\gamma\in\Gamma_{n}(F_{ij}^{(m)})}a_{2}\left(f(\gamma)\right)
&=& \sum_{\gamma\in\Gamma_{n}(K_{n})}a_{2}\left(f(\gamma)\right).
\end{eqnarray}
For an $(n-1)$-cycle $\gamma$ of $K_{n-1}^{(m)}$, let $\overline{ij}$ be an edge of $K_{n-1}^{(m)}$ which is not contained in $\gamma$. Note that there are $\binom{n - 1}{2} - (n-1) = (n^{2} -5n +4)/2$ ways to choose such a pair of $i$ and $j$. This implies that 
\begin{eqnarray}\label{eq3}
\ \ \ \ \ \ \ \sum_{\substack{1\le i<j\le n \\ i,j\neq m}}\sum_{\substack{\gamma\in\Gamma_{n-1}(K_{n-1}^{(m)}) \\ \overline{ij}\not\subset \gamma}} a_{2}\left(f(\gamma)\right)
= \frac{n^{2} -5n +4}{2}\sum_{\gamma\in\Gamma_{n-1}(K_{n-1}^{(m)})}a_{2}\left(f(\gamma)\right).
\end{eqnarray}
For a $6$-cycle $\gamma$ of $K_{n}$ which contains the vertex $m$, let $i$ and $j$ be the two vertices of $K_{n}$ which are adjacent to $m$ in $\gamma$. Then $\gamma$ is a $6$-cycle of $F_{ij}^{(m)}$ which contains $\overline{imj}$. This implies that 
\begin{eqnarray}\label{eq4}
\sum_{\substack{1\le i<j\le n \\ i,j\neq m}}\sum_{\substack{\gamma\in\Gamma_{6}(F_{ij}^{(m)}) \\ \overline{imj}\subset \gamma}} a_{2}\left(f(\gamma)\right)
&=& \sum_{\substack{\gamma\in\Gamma_{6}(K_{n}) \\ m\subset \gamma}} a_{2}\left(f(\gamma)\right). 
\end{eqnarray}
For a $5$-cycle $\gamma$ of $K_{n-1}^{(m)}$, let $\overline{ij}$ be an edge of $K_{n-1}^{(m)}$ which is not contained in $\gamma$. Note that there are $\binom{n - 1}{2} - 5 = (n^{2} - 3n -8)/2$ ways to choose such a pair of $i$ and $j$. This implies that 
\begin{eqnarray}\label{eq5}
\sum_{\substack{1\le i<j\le n \\ i,j\neq m}}\sum_{\substack{\gamma\in\Gamma_{5}(K_{n-1}^{(m)}) \\ \overline{ij}\not\subset \gamma}} a_{2}\left(f(\gamma)\right)
&=& \frac{n^{2} - 3n -8}{2}\sum_{\gamma\in\Gamma_{5}(K_{n-1}^{(m)})}a_{2}\left(f(\gamma)\right).
\end{eqnarray}
For a pair of disjoint cycles $\lambda$ of $K_{n}$ consisting of a $4$-cycle $\gamma$ which contains the vertex $m$ and a $3$-cycle $\gamma'$, let $i$ and $j$ be the two vertices of $K_{n}$ which are adjacent to $m$ in $\gamma$. Then $\lambda$ is a pair of disjoint cycles of $K_{n}$ consisting of a $4$-cycle $\gamma$ which contains $\overline{imj}$ and a $3$-cycle $\gamma'$. This implies that 
\begin{eqnarray}\label{eq6}
\ \ \ \ \ \ \ \sum_{\substack{1\le i<j\le n \\ i,j\neq m}}\sum_{\substack{\lambda=\gamma\cup \gamma'\in\Gamma_{3,4}(F_{ij}^{(m)}) \\ \gamma\in \Gamma_{4}(F_{ij}^{(m)}),\ \gamma'\in \Gamma_{3}(F_{ij}^{(m)}) \\ \overline{imj}\subset \gamma}} {\rm lk}\left(f(\lambda)\right)^{2}
= \sum_{\substack{\lambda=\gamma\cup \gamma'\in\Gamma_{3,4}(K_{n}) \\ \gamma\in \Gamma_{4}(K_{n}),\ \gamma'\in \Gamma_{3}(K_{n}) \\ m\subset \gamma}} {\rm lk}\left(f(\lambda)\right)^{2}.
\end{eqnarray}
For a pair of disjoint $3$-cycles $\lambda$ of $K_{n-1}^{(m)}$, let $\overline{ij}$ be an edge of $K_{n-1}^{(m)}$ which is not contained in $\lambda$. Note that there are $\binom{n - 1}{2} - 6 =  (n^{2} -3n -10)/2$ ways to choose such a pair of $i$ and $j$. This implies that 
\begin{eqnarray}\label{eq7}
\ \ \ \ \sum_{\substack{1\le i<j\le n \\ i,j\neq m}}\sum_{\substack{\lambda\in\Gamma_{3,3}(K_{n-1}^{(m)}) \\ \overline{ij}\not\subset \lambda}} {\rm lk}\left(f(\lambda)\right)^{2}
= \frac{n^{2} -3n -10}{2}\sum_{\lambda\in\Gamma_{3,3}(K_{n-1}^{(m)})}{\rm lk}\left(f(\lambda)\right)^{2}.
\end{eqnarray}
By combining (\ref{eq2}), (\ref{eq3}), (\ref{eq4}), (\ref{eq5}), (\ref{eq6}) and (\ref{eq7}) with (\ref{eq1}), we have 
\begin{eqnarray}
&&\sum_{\gamma\in\Gamma_{n}\left(K_{n}\right)}a_{2}\left(f(\gamma)\right)
+ \frac{n^{2} - 5n +4}{2}\sum_{\gamma\in\Gamma_{n-1}(K_{n-1}^{(m)})}a_{2}\left(f(\gamma)\right)\label{eq8}\\
&&+b\Biggl(
\sum_{\substack{\gamma\in\Gamma_{6}\left(K_{n}\right) \\ m\subset \gamma}} a_{2}\left(f(\gamma)\right)
+ \frac{n^{2} -3n -8}{2} \sum_{\gamma\in\Gamma_{5}(K_{n-1}^{(m)})}a_{2}\left(f(\gamma)\right)
\Biggr)\nonumber\\
&=& 
c\Biggl(
\sum_{\substack{\lambda=\gamma\cup \gamma'\in\Gamma_{3,4}\left(K_{n}\right) \\ \gamma\in \Gamma_{4}\left(K_{n}\right),\ \gamma'\in \Gamma_{3}\left(K_{n}\right) \\ m\subset \gamma}} {\rm lk}\left(f(\lambda)\right)^{2} + \frac{n^{2} - 3n -10}{2} \sum_{\lambda\in\Gamma_{3,3}(K_{n-1}^{(m)})}{\rm lk}\left(f(\lambda)\right)^{2}
\Biggr)\nonumber\\
&& + \frac{d(n^{2} - 3n +2)}{2}.  \nonumber
\end{eqnarray}
Then for the embedding $f$ restricted to $K_{n-1}^{(m)}$, by the assumption we have  
\begin{eqnarray}\label{eq9}
&& \sum_{\gamma\in \Gamma_{n-1}(K_{n-1}^{(m)})}a_{2}\left(f(\gamma)\right)\\
&=& -b\sum_{\gamma\in \Gamma_{5}(K_{n-1}^{(m)})}a_{2}\left(f(\gamma)\right) 
+ 
c\sum_{\lambda\in \Gamma_{3,3}(K_{n-1}^{(m)})}{{\rm lk}\left(f(\lambda)\right)}^{2} + d. \nonumber  
\end{eqnarray}
By combining (\ref{eq8}) and (\ref{eq9}), we have 
\begin{eqnarray}\label{eq10}
&&\sum_{\gamma\in\Gamma_{n}\left(K_{n}\right)}a_{2}\left(f(\gamma)\right)
+ b\sum_{\substack{\gamma\in\Gamma_{6}\left(K_{n}\right) \\ m\subset \gamma}} a_{2}\left(f(\gamma)\right)
+ b(n-6) \sum_{\gamma\in\Gamma_{5}(K_{n-1}^{(m)})}a_{2}\left(f(\gamma)\right)\\
&=& 
c\sum_{\substack{\lambda=\gamma\cup \gamma'\in\Gamma_{3,4}\left(K_{n}\right) \\ \gamma\in \Gamma_{4}\left(K_{n}\right),\ \gamma'\in \Gamma_{3}\left(K_{n}\right) \\ m\subset \gamma}} {\rm lk}\left(f(\lambda)\right)^{2} + 
c(n-7)\sum_{\lambda\in\Gamma_{3,3}(K_{n-1}^{(m)})}{\rm lk}\left(f(\lambda)\right)^{2}
+ d(n-1).  \nonumber
\end{eqnarray}
Now we take the sum of both sides of (\ref{eq10}) over $m = 1,2,\ldots,n$. For a $6$-cycle $\gamma$ of $K_{n}$, let $m$ be a vertex of $K_{n}$ which is contained in $\gamma$. Note that there are six ways to choose such a vertex $m$. This implies that 
\begin{eqnarray}\label{eq11}
\sum_{m=1}^{n}\sum_{\substack{\gamma\in\Gamma_{6}\left(K_{n}\right) \\ m\subset \gamma}} a_{2}\left(f(\gamma)\right)
&=& 6\sum_{\gamma\in \Gamma_{6}\left(K_{n}\right)}a_{2}\left(f(\gamma)\right).
\end{eqnarray}
For a $5$-cycle $\gamma$ of $K_{n}$, let $m$ be a vertex of $K_{n}$ which is not contained in $\gamma$. Then $\gamma$ is a $5$-cycle of $K_{n-1}^{(m)}$. Note that there are $n-5$ ways to choose such a vertex $m$. This implies that 
\begin{eqnarray}\label{eq12}
\sum_{m=1}^{n}\sum_{\gamma\in\Gamma_{5}(K_{n-1}^{(m)})}a_{2}\left(f(\gamma)\right)
&=& (n-5)\sum_{\gamma\in \Gamma_{5}\left(K_{n}\right)}a_{2}\left(f(\gamma)\right).
\end{eqnarray}
For a pair of disjoint cycles $\lambda$ of $K_{n}$ consisting of a $4$-cycle $\gamma$ and a $3$-cycle $\gamma'$, let $m$ be a vertex of $K_{n}$ which is contained in $\gamma$. Note that there are four ways to choose such a vertex $m$. This implies that 
\begin{eqnarray}\label{eq13}
\sum_{m=1}^{n}\sum_{\substack{\lambda=\gamma\cup \gamma'\in\Gamma_{3,4}\left(K_{n}\right) \\ \gamma\in \Gamma_{4}\left(K_{n}\right),\ \gamma'\in \Gamma_{3}\left(K_{n}\right) \\ m\subset \gamma}} {\rm lk}\left(f(\lambda)\right)^{2}
&=& 4\sum_{\lambda\in \Gamma_{3,4}\left(K_{n}\right)}{\rm lk}\left(f(\lambda)\right)^{2}. 
\end{eqnarray}
For a pair of two disjoint $3$-cycles $\lambda$ of $K_{n}$, let $m$ be a vertex of $K_{n}$ which is not contained in $\lambda$. Then $\lambda$ is a pair of two disjoint $3$-cycles of $K_{n-1}^{(m)}$. Note that there are $n-6$ ways to choose such a vertex $m$. This implies that 
\begin{eqnarray}\label{eq14}
\sum_{m=1}^{n}\sum_{\lambda\in\Gamma_{3,3}(K_{n-1}^{(m)})}{\rm lk}\left(f(\lambda)\right)^{2}
&=& (n-6)\sum_{\gamma\in \Gamma_{3,3}\left(K_{n}\right)}{\rm lk}\left(f(\lambda)\right)^{2}.
\end{eqnarray}
By combining  (\ref{eq11}), (\ref{eq12}), (\ref{eq13}) and (\ref{eq14}) with (\ref{eq10}), we have 
\begin{eqnarray}\label{eq15}
&&n\sum_{\gamma\in\Gamma_{n}\left(K_{n}\right)}a_{2}\left(f(\gamma)\right)
+ 6b\sum_{\gamma\in\Gamma_{6}\left(K_{n}\right)}a_{2}\left(f(\gamma)\right)\\
&& + b(n-5)(n-6)\sum_{\gamma\in\Gamma_{5}\left(K_{n}\right)}a_{2}\left(f(\gamma)\right) \nonumber\\
&=& 
4c\sum_{\lambda\in\Gamma_{3,4}\left(K_{n}\right)}{\rm lk}\left(f(\lambda)\right)^{2}
+c(n-6)(n-7)\sum_{\lambda\in\Gamma_{3,3}\left(K_{n}\right)}{\rm lk}\left(f(\lambda)\right)^{2}
+ dn(n-1).  \nonumber
\end{eqnarray}
Then by (\ref{eq15}) and Lemma \ref{nlem} (1) and (2), we have the desired conclusion.  
\end{proof}

\begin{proof}[Proof of Theorem \ref{mainthm}]
We prove this by induction on $n$. In the case of $n = 6$, by Theorem \ref{main1} we have the result. Assume that $n \ge 7$, then we have 
\begin{eqnarray}\label{ind0}
&&\sum_{\gamma\in \Gamma_{n}\left(K_{n-1}\right)}a_{2}\left(g(\gamma)\right)
- (n-6)!\sum_{\gamma\in \Gamma_{5}\left(K_{n-1}\right)}a_{2}\left(g(\gamma)\right)\\
&=& 
\frac{(n-6)!}{2} 
\sum_{\lambda\in \Gamma_{3,3}\left(K_{n-1}\right)}{{\rm lk}\left(g(\lambda)\right)}^{2}
- \frac{(n-6)!}{2}\binom{n-2}{5}\nonumber
\end{eqnarray}
for any spatial embedding $g$ of $K_{n-1}$. Then by (\ref{ind0}) and Lemma \ref{ind}, we have  
\begin{eqnarray*}
&& \sum_{\gamma\in \Gamma_{n}\left(K_{n}\right)}a_{2}\left(f(\gamma)\right)
-(n - 5)! \sum_{\gamma\in \Gamma_{5}\left(K_{n}\right)}a_{2}\left(f(\gamma)\right)\\ 
&=&   
\frac{1}{n}\left(\frac{(n - 6)!}{2}(n - 6)(n + 1) + 3(n - 6)!\right)\sum_{\lambda\in \Gamma_{3,3}\left(K_{n}\right)}{{\rm lk}\left(f(\lambda)\right)}^{2} \\
&& -\frac{(n-6)!}{2}\binom{n-2}{5}(n-1) - \frac{(n - 6)!}{2}\binom{n - 1}{5}\\
&=& \frac{(n - 5)!}{2}\sum_{\lambda\in \Gamma_{3,3}\left(K_{n}\right)}{{\rm lk}\left(f(\lambda)\right)}^{2} 
- \frac{(n-5)!}{2}\binom{n-1}{5} 
\end{eqnarray*}
for any spatial embedding $f$ of $K_{n}$. This completes the proof. 
\end{proof}

\begin{proof}[Proof of Corollary \ref{maincor}]
Note that  no pair of two disjoint $3$-cycles $\lambda$ of $K_{n}$ is shared by two  distinct subgraphs of $K_{n}$ isomorphic to $K_{6}$. Then Theorem \ref{CG1} (1) implies that $\sum_{\lambda\in \Gamma_{3,3}\left(K_{n}\right)}{{\rm lk}\left(f(\lambda)\right)}^{2}$ is greater than or equal to the number of subgraphs of $K_{n}$ isomorphic  to $K_{6}$, that is equal to $\binom{n}{6}$, and by a direct calculation  we have 
\begin{eqnarray}\label{dcal}
\frac{(n-5)!}{2} 
\bigg(
\binom{n}{6}
- \binom{n-1}{5}
\bigg)
= 
\frac{(n-5)(n-6)(n-1)!}{2\cdot 6!}.
\end{eqnarray}
Thus by (\ref{dcal}) and Theorem \ref{mainthm}, we have the result. 
\end{proof}

\begin{Remark}\label{mainrem0}
Endo-Otsuki introduced a certain special spatial embedding $f_{\rm b}$ of  $K_{n}$, a {\it canonical book presentation} of $K_{n}$ \cite{EO94}, and Otsuki also showed that $f_{\rm b}\left(K_{n}\right)$ contains exactly $\binom{n}{6}$ Hopf links corresponding to all the pairs of two disjoint $3$-cycles of $K_{n}$ if $n\ge 6$ \cite{Otsuki96}. Thus the lower bound of Corollary \ref{maincor} is sharp.  Furthermore, for any $5$-cycle $\gamma$ of $K_{n}$, $f_{\rm b}(\gamma)$ is a trivial knot. Thus for an integer $n\ge 6$, we have 
\begin{eqnarray*}
\sum_{\gamma\in \Gamma_{n}\left(K_{n}\right)}a_{2}\left(f_{\rm b}(\gamma)\right)
= 
\frac{(n-5)(n-6)(n-1)!}{2\cdot 6!}.
\end{eqnarray*}
\end{Remark}

\begin{proof}[Proof of Corollary \ref{maincor0}]
For any two spatial embeddings $f$ and $g$ of $K_{n}$, by Theorem \ref{mainthm}, we have 
\begin{eqnarray}\label{co1}
&& \sum_{\gamma\in \Gamma_{n}\left(K_{n}\right)}a_{2}\left(f(\gamma)\right)
- \sum_{\gamma\in \Gamma_{n}\left(K_{n}\right)}a_{2}\left(g(\gamma)\right)\\ 
&\equiv& 
\frac{(n-5)!}{2}
\Big(\sum_{\lambda\in \Gamma_{3,3}\left(K_{n}\right)}{{\rm lk}\left(f(\lambda)\right)}^{2}
- \sum_{\lambda\in \Gamma_{3,3}\left(K_{n}\right)}{{\rm lk}\left(g(\lambda)\right)}^{2}
\Big)\pmod{(n-5)!}. \nonumber
\end{eqnarray}
Since  $\sum_{\lambda\in \Gamma_{3,3}\left(K_{n}\right)}{{\rm lk}\left(f(\lambda)\right)}^{2}$ and $\sum_{\lambda\in \Gamma_{3,3}\left(K_{n}\right)}{{\rm lk}\left(g(\lambda)\right)}^{2}$ have the same parity, that is also equal to the parity of $\binom{n}{6}$, by (\ref{co1}), we have 
\begin{eqnarray}\label{co2}
\sum_{\gamma\in \Gamma_{n}\left(K_{n}\right)}a_{2}\left(f(\gamma)\right)
\equiv \sum_{\gamma\in \Gamma_{n}\left(K_{n}\right)}a_{2}\left(g(\gamma)\right)
\pmod{(n-5)!}. 
\end{eqnarray}
Note that  there exists a spatial embedding $g$ of $K_{n}$ such that 
\begin{eqnarray}\label{co4}
\sum_{\lambda\in \Gamma_{3,3}\left(K_{n}\right)}{{\rm lk}\left(g(\lambda)\right)}^{2} = \binom{n}{6}, 
\end{eqnarray}
see Remark \ref{mainrem0} or Remark \ref{recrec}. Thus by (\ref{co2}) and (\ref{co4}), we have 
\begin{eqnarray}\label{co5}
\sum_{\gamma\in \Gamma_{n}\left(K_{n}\right)}a_{2}\left(f(\gamma)\right)
\equiv  
\frac{(n-5)!}{2} 
\bigg(
\binom{n}{6}
- \binom{n-1}{5}
\bigg) \pmod{(n-5)!}
\end{eqnarray}
for any spatial embedding $f$ of $K_{n}$. Here, it can be seen that $\binom{n}{6}$ is odd if and only if $n\equiv 6,7\pmod{8}$, and $\binom{n-1}{5}$ is odd if and only if  $n\equiv 0,6\pmod{8}$ by an application of Lucas's theorem for binomial coefficients (see \cite{Fine47} for example). If $n\not\equiv 0,7\pmod{8}$, then since $\binom{n}{6} - \binom{n-1}{5}$ is even, by (\ref{co5}), we have 
\begin{eqnarray*}
\sum_{\gamma\in \Gamma_{n}\left(K_{n}\right)}a_{2}\left(f(\gamma)\right)
\equiv 0\pmod{(n-5)!}. 
\end{eqnarray*}
If $n\equiv 0\pmod{8}$, then since $\binom{n}{6}$ is even and  $\binom{n-1}{5}$ is odd, by (\ref{co5}), we have 
\begin{eqnarray*}
\sum_{\gamma\in \Gamma_{n}\left(K_{n}\right)}a_{2}\left(f(\gamma)\right)
\equiv - \frac{(n-5)!}{2} \binom{n-1}{5}\pmod{(n-5)!}. 
\end{eqnarray*}
If $n\equiv 7\pmod{8}$, then since $\binom{n}{6}$ is odd and $\binom{n-1}{5}$ is even, by (\ref{co5}), we have 
\begin{eqnarray*}
\sum_{\gamma\in \Gamma_{n}\left(K_{n}\right)}a_{2}\left(f(\gamma)\right)
\equiv \frac{(n-5)!}{2}\binom{n}{6}\pmod{(n-5)!}. 
\end{eqnarray*}
This completes the proof. 
\end{proof}

\begin{Remark}\label{mainrem1}
By applying the case of $n = 7$ in Corollary \ref{maincor0}, we have 
\begin{eqnarray*}
\sum_{\gamma\in \Gamma_{7}\left(K_{7}\right)}a_{2}\left(f(\gamma)\right)
\equiv \frac{2!}{2}\binom{7}{6}
\equiv 1
\pmod{2}, 
\end{eqnarray*}
that is, Theorem \ref{CG1} (2). On the other hand, for any spatial embedding $f$ of $K_{8}$,  it was shown that $\sum_{\gamma\in \Gamma_{8}\left(K_{8}\right)}a_{2}\left(f(\gamma)\right)\equiv 0\pmod{3}$ by Foisy \cite{F08} and $1\pmod{2}$ by Hirano \cite{HiranoD}. These  results imply that $\sum_{\gamma\in \Gamma_{8}\left(K_{8}\right)}a_{2}\left(f(\gamma)\right)\equiv 3\pmod{6}$, and it can also be shown by applying the case of $n = 8$ in Corollary \ref{maincor0}: 
\begin{eqnarray*}
\sum_{\gamma\in \Gamma_{8}\left(K_{8}\right)}a_{2}\left(f(\gamma)\right)
\equiv -\frac{3!}{2}\binom{7}{5}
= -63 
\equiv 3 \pmod{6}. 
\end{eqnarray*}
Hirano also showed that $\sum_{\gamma\in \Gamma_{n}\left(K_{n}\right)}a_{2}\left(f(\gamma)\right)\equiv 0\pmod{2}$ for any spatial embedding $f$ of $K_{n}$ if  $n\ge 9$ \cite{HiranoD}. Corollary \ref{maincor0} also generalizes it remarkably. 
\end{Remark}

\begin{proof}[Proof of Corollary \ref{maincor2}]
We obtain the desired lower bound from Corollary \ref{maincor} directly, since for every $5$-cycle $\gamma$, $f_{\rm r}(\gamma)$ is trivial. On the other hand, it is known that every rectilinear spatial graph of $K_{6}$ contains at most three Hopf links (Hughes \cite{Hughes06}, Huh-Jeon \cite{HJ07}, Nikkuni \cite{Nikkuni09}). This implies that $\sum_{\lambda\in \Gamma_{3,3}\left(K_{n}\right)}{{\rm lk}\left(f_{\rm r}(\lambda)\right)}^{2}$ is less than or equal to $3\binom{n}{6}$, and by a direct calculation we have 
\begin{eqnarray}\label{dcal2}
\frac{(n-5)!}{2} 
\bigg(
3\binom{n}{6}
- \binom{n-1}{5}
\bigg)
= 
\frac{3(n-2)(n-5)(n-1)!}{2\cdot 6!}.
\end{eqnarray}
Thus by (\ref{dcal2}) and Theorem \ref{mainthmrecti}, we get the desired upper bound.  
\end{proof}

\begin{figure}[htbp]
      \begin{center}
\scalebox{0.465}{\includegraphics*{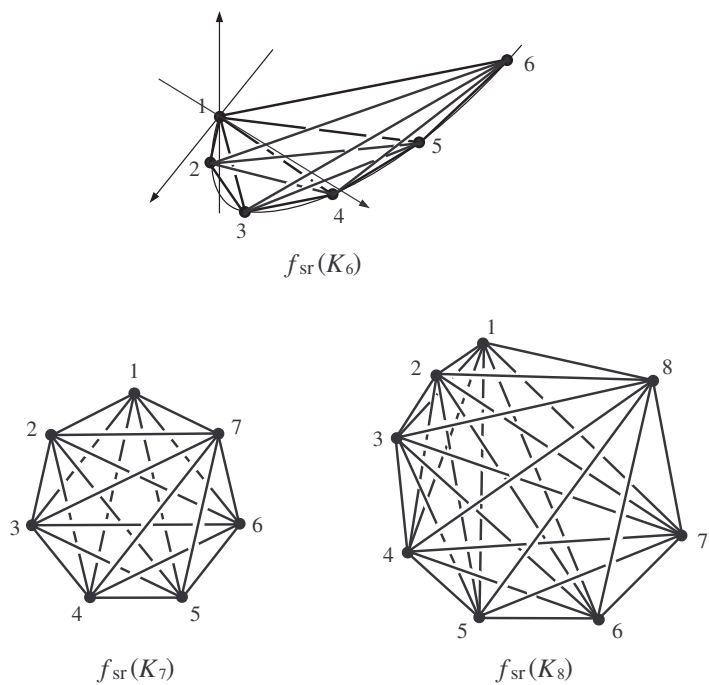}}
      \end{center}
   \caption{Standard rectilinear spatial embedding $f_{\rm sr}$ of $K_{n}\ (n = 6,7,8)$}
  \label{K678rect}
\end{figure} 

\begin{Remark}\label{recrec}
A special rectilinear spatial embedding $f_{\rm sr}$ of $K_{n}$ can be constructed by taking $n$ vertices $1,2,\ldots,n$ of $K_{n}$ in order on the moment curve $(t,t^{2},t^{3})$ in ${\mathbb R}^{3}$ and connecting every pair of two distinct vertices $i$ and $j$ by a straight line segment, see Fig. \ref{K678rect} for $n=6,7,8$. We call $f_{\rm sr}$ the {\it standard rectilinear spatial embedding} of $K_{n}$. For the standard rectilinear spatial embedding $f_{\rm sr}$ of $K_{n}$ ($n\ge 6$) and a subgraph $F$ of $K_{n}$ isomorphic to $K_{6}$, it can be easily seen that the embedding $f_{\rm sr}$ restricted to $F$ is equivalent to the standard rectlinear spatial embedding of $K_{6}$. Since the standard rectilinear spatial graph of $K_{6}$ contains exactly one nonsplittable $2$-component link which is a Hopf link, $f_{\rm sr}\left(K_{n}\right)$ contains exactly $\binom{n}{6}$ triangle-triangle Hopf links. Thus the lower bound in Corollary \ref{maincor2} is sharp. 
\end{Remark}

Before proving Corollary \ref{maincor3}, we recall two geometric invariants of knots and links. For a knot or link $L$, 
the {\it crossing number} of $L$ is the minimum number of crossings in a regular diagram of $L$ on the plane, denoted by $c(L)$, and the {\it stick number} of $L$ is the minimum number of edges in a polygon which represents $L$, denoted by $s(L)$.

\begin{proof}[Proof of Corollary \ref{maincor3}]
For a knot $K$, it has been shown that 
\begin{eqnarray}\label{cal}
c(K) \le \frac{\left(s(K) - 3\right)\left(s(K) - 4\right)}{2}
\end{eqnarray}
by Calvo \cite[Theorem 4]{Calvo01}, and also has been shown that 
\begin{eqnarray}\label{pv1e}
a_{2}(K) \le \frac{c(K)^{2}}{8}
\end{eqnarray}
by Polyak-Viro \cite[Theorem 1.E]{PV01}. By combining (\ref{cal}) and (\ref{pv1e}), for a polygonal knot $K$ with less than or equal to $n$ sticks, we have 
\begin{eqnarray}\label{a2esti} 
a_{2}(K) \le 
\left\lfloor 
\frac{\mathstrut (n-3)^{2}(n-4)^{2}}{\mathstrut 32}
\right\rfloor. 
\end{eqnarray}
Then by the lower bound in Corollary \ref{maincor2} and (\ref{a2esti}), we have the desired estimation from below. 
\end{proof}

\begin{Remark}\label{fh}
The concrete values of $r_{n}$ for $7\le n\le 15$ are given in the following table. Note that in the case of $n=8$, we can obtain an estimate from below better than $r_{8}$ of the number of nontrivial Hamiltonian knots with a positive value of $a_{2}$ in every rectilinear spatial graph of $K_{8}$, see Example \ref{ub2} and Remark \ref{3311}. 

\begin{table}[htbp]
\begin{center}
  \begin{tabular}{|c||c|c|c|c|c|c|c|c|c|c|} \hline
    $n$ & $7$ & $8$ & $9$ & $10$ & $11$ & $12$ & $13$ & $14$  & $15$  &$\cdots$ \\ \hline
    $r_{n}$ & $1$ & $2$ & $12$ & $92$ & $772$ & $7187$ & $73628$ & $823680$ &  $10015889$ & $\cdots$ \\ \hline
  \end{tabular}
\end{center}
\end{table}
\end{Remark}

\section{Examples and Problems} 

In the following examples, we denote a $k$-cycle $\overline{i_{1}i_{2}}\cup \overline{i_{2}i_{3}}\cup \cdots \cup \overline{i_{k-1}i_{k}}\cup \overline{i_{k}i_{1}}$ of $K_{n}$ by $[i_{1}i_{2}\cdots i_{k}]$. We also recall the following fundamental results on stick numbers for knots and links (see Adams \cite[\S 1.6]{Adams04}, Negami \cite[Theorem 6]{Negami91}, Adams-Brennan-Greilsheimer-Woo \cite[Theorem 2.1]{ABGW97} and Calvo \cite[Theorem 1]{Calvo01}), where we denote each of knots and links appearing in the statement by using its label in Rolfsen's table \cite{R76}. 

\begin{Proposition}\label{stick} Let $L$ be a link. Then the following statements hold. 
\begin{enumerate}
\item If $L$ is a nontrivial knot, then $s(L)\ge 6$. 
\item $s(L)=6$ if and only if $L$ is equivalent to $3_{1}$, $0_{1}^{2}$ or $2_{1}^{2}$.  
\item $s(L)=7$ if and only if $L $ is equivalent to $4_{1}$ or $4_{1}^{2}$. 
\item $s(L)=8$ if and only if $L$ is equivalent to $5_{1}$, $5_{2}$, $6_{1}$, $6_{2}$, $6_{3}$,  the granny knot $3_{1}\# 3_{1}$, the square knot $3_{1}\# 3_{1}^{*}$, $8_{19}$, $8_{20}$ or $5_{1}^{2}$.
\end{enumerate}
\end{Proposition}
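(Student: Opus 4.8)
The plan is to obtain Proposition~\ref{stick} by assembling the known classifications of small stick numbers, since each of the four clauses is an established theorem. For part~(1) the point is simply that a polygon with at most five edges represents the trivial knot: with three edges it is a planar triangle, and with four or five edges a short direct argument (or the observation that its crossing number is necessarily zero) shows it is unknotted; this is recorded in Adams~\cite[\S 1.6]{Adams04}. For part~(2), a link realized by six sticks in total is either a hexagonal knot or a two-component link split as $3+3$, this being the only admissible partition since a cycle needs at least three edges. For a hexagonal knot, the inequality~(\ref{cal}) of Calvo gives $c(K)\le 3$, so $K$ is a nontrivial knot of crossing number exactly three, hence $K=3_{1}$; for the link each component is a planar triangle, and two disjoint triangles realize only the trivial link $0_{1}^{2}$ or the Hopf link $2_{1}^{2}$, the converse realizations being immediate. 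This clause is due to Negami~\cite[Theorem 6]{Negami91}.

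Parts~(3) and~(4) are the substantive enumerations, and I would cite them directly: the classification of all knots and links with stick number seven is Adams-Brennan-Greilsheimer-Woo~\cite[Theorem 2.1]{ABGW97}, and the classification with stick number eight is Calvo~\cite[Theorem 1]{Calvo01}. If one wished to reprove part~(4) from scratch, the outline would be to apply~(\ref{cal}) to bound the crossing number of an eight-stick knot by $10$, to split a two-component eight-stick link as $3+5$ or $4+4$ (three or more components already require nine sticks), and then to reduce the surviving candidates against Rolfsen's table~\cite{R76} using geometric constraints on eight-gons sharper than the crossing-number bound alone.

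The main obstacle, were this not already in the literature, is precisely that last reduction. The bound $c(K)\le 10$ does not by itself exclude nine- and ten-crossing knots that might a priori be realized with eight sticks, nor does it rule out most of the low-crossing knots, so one needs genuinely geometric arguments about eight-edge polygons together with a separate and somewhat delicate treatment of the $4+4$ link configurations to close the list. Since all four clauses are theorems already available, the proof in the paper consists of the combination of the four cited sources.
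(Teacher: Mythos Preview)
Your proposal is correct and matches the paper's treatment: Proposition~\ref{stick} is not proved in the paper at all but is simply recalled from the literature with the same four citations (Adams, Negami, Adams--Brennan--Greilsheimer--Woo, and Calvo) that you identify. Your additional sketches for parts~(1) and~(2) and your outline of the obstacles in part~(4) go beyond what the paper does, but they are accurate and do no harm.
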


\begin{Example}\label{ub0}
Let $f_{\rm r}$ be a rectilinear spatial embedding of  $K_{6}$. Then by Theorem \ref{mainthmrecti} (Theorem \ref{main1}) and Corollary \ref{maincor2}, we have 
\begin{eqnarray}\label{hj60}
\sum_{\gamma\in \Gamma_{6}\left(K_{6}\right)}a_{2}\left(f_{\rm r}(\gamma)\right)
= 
\frac{1}{2}\sum_{\lambda\in \Gamma_{3,3}\left(K_{6}\right)}{{\rm lk}\left(f_{\rm r}(\lambda)\right)}^{2}
- \frac{1}{2}, 
\end{eqnarray}
\begin{eqnarray}\label{hj6}
0 \le \sum_{\gamma\in \Gamma_{6}\left(K_{6}\right)}a_{2}\left(f_{\rm r}(\gamma)\right) 
\le 1. 
\end{eqnarray}
As it has been shown in \cite[\S 4]{Nikkuni09}, (\ref{hj60}) and (\ref{hj6})  enable us to give an alternative topological proof of the fact that every rectilinear spatial graph $f_{\rm r}\left(K_{6}\right)$ contains at most one trefoil knot, in particular,  $f_{\rm r}\left(K_{6}\right)$ does not contain a trefoil knot if and only if $f_{\rm r}\left(K_{6}\right)$ contains exactly one Hopf link, and $f_{\rm r}\left(K_{6}\right)$ contains a trefoil knot if and only if $f_{\rm r}\left(K_{6}\right)$ contains exactly three Hopf links, which was originally proven by Huh-Jeon \cite{HJ07} in combinatorial way. Actually, it follows from Proposition \ref{stick} (1) and (2) that $\sum_{\gamma\in \Gamma_{6}\left(K_{6}\right)}a_{2}\left(f_{\rm r}(\gamma)\right)$ equals the number of trefoil knots in $f_{\rm r}\left(K_{6}\right)$ because $a_{2}(3_{1}) = 1$, and $\sum_{\lambda\in \Gamma_{3,3}\left(K_{6}\right)}{{\rm lk}\left(f_{\rm r}(\lambda)\right)}^{2}$ equals  the number of Hopf links in $f_{\rm r}\left(K_{6}\right)$. 
\end{Example}

\begin{Example}\label{ub}
For a spatial embedding $f$ of $K_{7}$, by Theorem \ref{mainthm}, we have 
\begin{eqnarray}\label{new1}
\sum_{\gamma\in \Gamma_{7}\left(K_{7}\right)}a_{2}\left(f(\gamma)\right)
- 2\sum_{\gamma\in \Gamma_{5}\left(K_{7}\right)}a_{2}\left(f(\gamma)\right)
= 
\sum_{\lambda\in \Gamma_{3,3}\left(K_{7}\right)}{{\rm lk}\left(f(\lambda)\right)}^{2}
- 6, 
\end{eqnarray}
and we also have 
\begin{eqnarray}\label{e7}
\sum_{\gamma\in \Gamma_{7}\left(K_{7}\right)}a_{2}\left(f(\gamma)\right)
- 2\sum_{\gamma\in \Gamma_{5}\left(K_{7}\right)}a_{2}\left(f(\gamma)\right)
\ge 1, 
\end{eqnarray}
\begin{eqnarray*}
\sum_{\gamma\in \Gamma_{7}\left(K_{7}\right)}a_{2}\left(f(\gamma)\right)\equiv 1\pmod{2} 
\end{eqnarray*}
by Corollaries \ref{maincor} and \ref{maincor0}. Let $h$ be the spatial embedding of $K_{7}$ as illustrated in the left hand side of Fig. \ref{K78examples}. It is known that $h\left(K_{7}\right)$ contains exactly one nontrivial knot $h([1357246])$ which is a trefoil knot \cite{CG83}. Since $a_{2}\left(3_{1}\right) = 1$, the embedding $h$ realizes the lower bound in  (\ref{e7}). In particular for a rectilinear spatial embedding $f_{\rm r}$ of $K_{7}$, by Theorem \ref{mainthmrecti} and Corollary \ref{maincor2}, we have 
\begin{eqnarray}\label{e7200}
\sum_{\gamma\in \Gamma_{7}\left(K_{7}\right)}a_{2}\left(f_{\rm r}(\gamma)\right)
= 
\sum_{\lambda\in \Gamma_{3,3}\left(K_{7}\right)}{{\rm lk}\left(f_{\rm r}(\lambda)\right)}^{2}
- 6, 
\end{eqnarray}
\begin{eqnarray}\label{e72}
1 \le \sum_{\gamma\in \Gamma_{7}\left(K_{7}\right)}a_{2}\left(f_{\rm r}(\gamma)\right) 
\le 15. 
\end{eqnarray}
As it has been shown in \cite{Nikkuni09}, the lower bound in (\ref{e72}) enables us to give much simpler  topological proof of the fact that every rectilinear spatial graph of $K_{7}$ contains a trefoil knot, which was originally proven by Brown \cite{B77} and Ram{\'\i}rez Alfons{\'\i}n \cite{RA99} in combinatorial and computational way. Actually, by (\ref{e72}), there exists at least one Hamiltonian cycle $\gamma_{0}$ of $K_{7}$ such that $a_{2}\left(f_{\rm r}(\gamma_{0})\right) > 0$. Then by Proposition \ref{stick} (2) and (3), $f_{\rm r}(\gamma_{0})$ is either a trefoil knot or a figure eight knot. Since $a_{2}\left(4_{1}\right)  = - 1$, the knot $f_{\rm r}\left(\gamma_{0}\right)$ must be a trefoil knot. We also remark here that $h$ is equivalnt to the standard rectilinear spatial embedding $f_{\rm sr}$ of $K_{7}$ in Fig. \ref{K678rect}. We refer the reader to \cite{Huh12}, \cite{L10} for related works on rectilinear spatial graphs of $K_{7}$ (especially in \cite{Huh12}, a remarkable result is shown that the number of figure eight knots in a rectilinear spatial graph of $K_{7}$ is at most three). Moreover, according to a computer search in \cite{Jeon10}, there seems to be no rectilinear  embedding $f_{\rm r}$ of $K_{7}$ such that $\sum_{\gamma\in \Gamma_{7}\left(K_{7}\right)}a_{2}\left(f_{\rm r}(\gamma)\right) = 13, 15$, or equivalently by (\ref{e7200}), $\sum_{\lambda\in \Gamma_{3,3}(K_{7})}{\rm lk}(f_{\rm r}(\gamma))^{2} = 19,21$. This strongly suggests that the upper bound in Corollary \ref{maincor2} is not sharp.  
\end{Example}

\begin{figure}[htbp]
      \begin{center}
\scalebox{0.525}{\includegraphics*{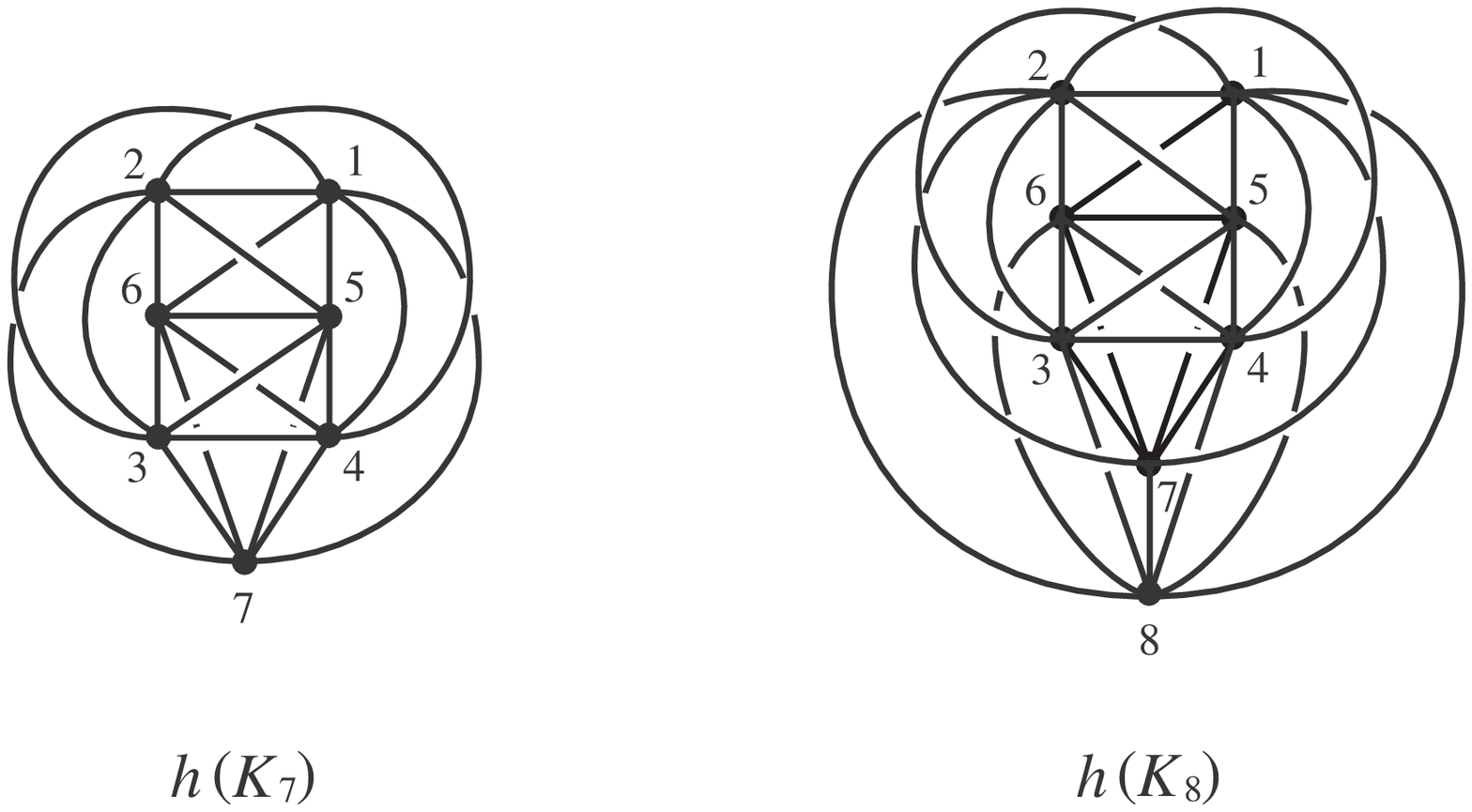}}
      \end{center}
   \caption{}
  \label{K78examples}
\end{figure} 

\begin{Problem}\label{prob1}
Determine the sharp upper bound of $\sum_{\gamma\in \Gamma_{n}\left(K_{n}\right)}a_{2}\left(f_{\rm r}(\gamma)\right)$ for all rectilinear spatial embeddings $f_{\rm r}$ of $K_{n}$ for each $n\ge 7$.  
\end{Problem}

\begin{Example}\label{ub2}
For a spatial embedding $f$ of $K_{8}$, by Theorem \ref{mainthm}, we have 
\begin{eqnarray}\label{new2}
\ \ \ \ \ \sum_{\gamma\in \Gamma_{8}\left(K_{8}\right)}a_{2}\left(f(\gamma)\right)
- 6\sum_{\gamma\in \Gamma_{5}\left(K_{8}\right)}a_{2}\left(f(\gamma)\right)
= 
3\sum_{\lambda\in \Gamma_{3,3}\left(K_{8}\right)}{{\rm lk}\left(f(\lambda)\right)}^{2}
- 63, 
\end{eqnarray}
and we also have 
\begin{eqnarray}\label{e8}
&& \sum_{\gamma\in \Gamma_{8}\left(K_{8}\right)}a_{2}\left(f(\gamma)\right)
- 6\sum_{\gamma\in \Gamma_{5}\left(K_{8}\right)}a_{2}\left(f(\gamma)\right)
\ge 21,
\end{eqnarray}
\begin{eqnarray*}
\sum_{\gamma\in \Gamma_{8}\left(K_{8}\right)}a_{2}\left(f(\gamma)\right) 
\equiv 3\pmod{6} 
\end{eqnarray*}
by Corollaries \ref{maincor} and \ref{maincor0}. Let $h$ be the spatial embedding of $K_{8}$ as illustrated in the right hand side of Fig. \ref{K78examples}. It is known that $h\left(K_{8}\right)$ contains exactly twenty one nontrivial Hamiltonian knots, all of which are trefoil knots \cite{BBFHL07}. Since $h(\gamma)$ is a trivial knot for any $5$-cycle $\gamma$ of $K_{8}$, the embedding $h$ realizes the lower bound in (\ref{e8}). 
In particular for a rectilinear spatial embedding $f_{\rm r}$ of $K_{8}$, by Theorem \ref{mainthmrecti} and Corollary \ref{maincor2}, we have 
\begin{eqnarray}\label{new2r}
\sum_{\gamma\in \Gamma_{8}\left(K_{8}\right)}a_{2}\left(f_{\rm r}(\gamma)\right)
= 
3\sum_{\lambda\in \Gamma_{3,3}\left(K_{8}\right)}{{\rm lk}\left(f_{\rm r}(\lambda)\right)}^{2}
- 63, 
\end{eqnarray}
\begin{eqnarray}\label{e82}
21\le \sum_{\gamma\in \Gamma_{8}\left(K_{8}\right)}a_{2}\left(f_{\rm r}(\gamma)\right) 
\le 189. 
\end{eqnarray}
By Proposition \ref{stick}, all of the polygonal knots with eight sticks are $0_{1}$, $3_{1}$, $4_{1}$, $5_{1}$, $5_{2}$, $6_{1}$, $6_{2}$, $6_{3}$, $3_{1}\# 3_{1}$, $3_{1}\# 3_{1}^{*}$, $8_{19}$ and $8_{20}$. Moreover, the values of $a_{2}$ for them are as follows: 

\begin{table}[htbp]
\begin{center}
  \begin{tabular}{|c||c|c|c|c|c|c|c|c|c|c|c|c|} \hline
 $K$    & $0_{1}$ & $3_{1}$ & $4_{1}$ & $5_{1}$ & $5_{2}$ & $6_{1}$ & $6_{2}$ & $6_{3}$  & $3_{1}\# 3_{1}$  & $3_{1}\# 3_{1}^{*}$ & $8_{19}$ & $8_{20}$\\ \hline
    $a_{2}(K)$ & $0$ & $1$ & $-1$ & $3$ & $2$ & $-2$ & $-1$ & $1$ & $2$ & $2$ & $5$ & $2$ \\ \hline
  \end{tabular}
\end{center}
\end{table}

\noindent
Thus it follows from (\ref{e82}) that every rectilinear spatial graph of $K_{8}$ always contains at least one of  $3_{1}$, $5_{1}$, $5_{2}$, $6_{3}$,  $3_{1}\# 3_{1}$, $3_{1}\# 3_{1}^{*}$, $8_{19}$ and  $8_{20}$ as a  Hamiltonian knot. Moreover, since the maximum value of $a_{2}$ in  every polygonal knot with exactly eight sticks is equal to five, we can refine (\ref{a2esti}) if $n = 8$ and then we can also refine Corollary \ref{maincor3}: the minimum number of nontrivial Hamiltonian knots with a positive value of $a_{2}$ in every rectilinear spatial graph of $K_{8}$ is at least $\lceil 21/5\rceil = 5$.  But this is not yet the sharp lower bound, see Remark \ref{3311}. 

As we mentioned in Remark \ref{recrec}, the standard rectilinear spatial embedding $f_{\rm sr}$ of $K_{8}$ in Fig. \ref{K678rect} realizes the lower bound in (\ref{e82}).  Moreover, it is known that all of the nontrivial Hamiltonian knots in $f_{\rm sr}\left(K_{8}\right)$ are trefoil knots \cite{RA08}. This means that $f_{\rm sr}\left(K_{8}\right)$ also contains exactly twenty one nontrivial Hamiltonian knots, all of which are trefoil knots. We also remark here that $h$ and $f_{\rm sr}$ are not equivalent 
because  $h\left(K_{8}\right)$ contains a ``triangle-pentagon'' link with nonzero even linking number (actually $h([257]\cup [13846])$ is equivalent to $4_{1}^{2}$), but $f_{\rm sr}\left(K_{8}\right)$ does not contain such a triangle-pentagon link. The authors do not know whether the embedding $h$ is equivalent to a certain rectilinear spatial embedding of $K_{8}$ or not.

\end{Example}

\begin{Remark}\label{3311}
It is known that every rectilinear spatial graph of $K_{3,3,1,1}$ contains at least one nontrivial Hamiltonian knot with a positive value of $a_{2}$ (Hashimoto-Nikkuni \cite[Corollary 1.10]{HN14}). Since there are two hundred and eighty subgraphs of $K_{8}$ isomorphic to $K_{3,3,1,1}$ and for any $8$-cycle $\gamma$ of $K_{8}$ there exist thirty six subgraphs of $K_{8}$ isomorphic to $K_{3,3,1,1}$ containing $\gamma$, we have that there are at least $\lceil 280/36\rceil = 8$ nontrivial Hamiltonian knots with a positive value of $a_{2}$ in every rectilinear spatial graph of $K_{8}$. 
\end{Remark}

\begin{Problem}\label{prob2}
Determine the minimum number of nontrivial Hamiltonian knots (with a positive value of $a_{2}$) in every rectilinear spatial graph of $K_{n}$ for each $n\ge 8$.  
\end{Problem}

We also refer the reader to \cite{FM09}, \cite{AMT13} and \cite{gordian} for a study of counting nontrivial knots and nonsplittable links in a spatial graph of $K_{n_{1},n_{2},\ldots,n_{k}}$. In particular, a computer program {\it Gordian} \cite{gordian} is very useful, which enables us to calculate the values of $a_{2}$ for all constituent knots and ${\rm lk}$ for all constituent $2$-component links in a spatial complete graph without difficulty.


%
{\normalsize
}

\end{document}